\newtheorem{theorem}{Theorem}
\newtheorem{proposition}[theorem]{Proposition}
\newtheorem{lemma}{Lemma}
\newtheorem{example}{Example}
\newtheorem{definition}{Definition}
\renewcommand{\epsilon}{\varepsilon}
\def\Id{\text{\rm Id}}
\def\cA{\EuScript{A}}
\def\N{\mathbb{N}}
\def\Z{\mathbb{Z}}
\def\R{\mathbb{R}}
\begin{document}

\title{Admissibility and nonuniformly hyperbolic sets}

\begin{abstract}
We obtain a characterization of two classes of dynamics with nonuniformly hyperbolic behavior in terms of an admissibility property. Namely, we consider exponential dichotomies with respect to a sequence of norms and nonuniformly hyperbolic sets. We note that the approach to establishing exponential bounds along the stable and the unstable directions differs from the standard technique of substituting test sequences. Moreover, we obtain the bounds in a single step.
\end{abstract}

\begin{thanks}
{L.B. and C.V. were supported by Portuguese National Funds through FCT - Funda\c{c}\~ao para a Ci\^encia e a Tecnologia within project PTDC/MAT/117106/2010 and by CAMGSD}
\end{thanks}

\author{Luis Barreira}
\address{Departamento de Matem\'atica,
Instituto Superior T\'ecnico, 1049-001 Lisboa, Portugal}
\email{barreira@math.ist.utl.pt}

\author{Davor Dragi\v cevi\'c}
\address{Department of Mathematics,
University of Rijeka, 51000 Rijeka, Croatia}
\email{ddragicevic@math.uniri.hr}

\author{Claudia Valls}
\address{Departamento de Matem\'atica,
Instituto Superior T\'ecnico, 1049-001 Lisboa, Portugal}
\email{cvalls@math.ist.utl.pt}

\keywords{Exponential dichotomies, nonuniformly hyperbolic sets}
\subjclass[2010]{Primary: 37D99.}

\maketitle

\section{Introduction}

Our main objective is to obtain a characterization of two classes of dynamics with nonuniformly hyperbolic behavior in terms of an admissibility property. Namely, we consider the class of exponential dichotomies with respect to a sequence of norms and the class of nonuniformly hyperbolic sets.

In the first part of the paper we consider a nonautonomous dynamics with discrete time obtained from a sequence of linear operators on a Banach space and we characterize the notion of an exponential dichotomy with respect to a sequence of norms. The principal motivation for considering this notion is that includes both the notions of a uniform and of a nonuniform exponential dichotomy as special cases.
We refer the reader to the books \cite{CLb, H, He, SY} for details and further references on the uniform theory. On the other hand, the requirement of uniformity for the asymptotic behavior is often too stringent for the dynamics and it turns out that the notion of a nonuniform exponential dichotomy is much more typical. We refer the reader to \cite{BP2} for an account of a substantial part of the theory.
Most of the work in the literature related to admissibility has been devoted to the study of uniform exponential dichotomies. For some of the most relevant early contributions in the area we refer to the books by Massera and Sch\"affer~\cite{MS} and by Dalec$'$ki\u{\i} and Kre{\u\i}n \cite{DK}. We also refer to~\cite{LV} for some early results in infinite-dimensional spaces. For a detailed list of references, we refer the reader to \cite{CLb} and for more recent work to Huy~\cite{HH}.

We emphasize that we consider the general case of a noninvertible dynamics which means that we assume only the invertibility along the unstable direction. Moreover, we characterize exponential dichotomies with respect to a sequence of norms in terms of the admissibility of a large family of Banach spaces (the particular case of $l^p$ spaces was considered in~\cite{BDV}). We note that the approach to establishing exponential bounds along the stable and the unstable directions differs from the standard technique of substituting test sequences (see for example~\cite{H, HH}). Moreover, in contrast to the existing approaches, we are able to obtain bounds along the stable and unstable directions in a single step.

In the second part of the paper we obtain an analogous characterization of nonuniformly hyperbolic sets. The notion of a nonuniformly hyperbolic set arises naturally in the context of smooth ergodic theory. Indeed, if $f$ is a $C^1$ diffeomorphism of a finite-dimensional compact manifold preserving a finite measure $\mu$ with nonzero Lyapunov exponents, then there exists a nonuniformly hyperbolic set of full $\mu$-measure. We refer the reader to \cite{BP2} for details.
Our work is close in spirit to that of Mather~\cite{M}, who obtained a similar characterization of uniformly hyperbolic sets, as well as that of Dragi\v{c}evi\'c and Slijep\v{c}evi\'c~\cite{DS}, where the problem of extending Mather's result to nonuniformly hyperbolic dynamics was first considered. However, there are substantial differences between our approach and that in~\cite{DS}, which provides a characterization of ergodic invariant measures with nonzero Lyapunov exponents and not of nonuniformly hyperbolic sets.

\section{Preliminaries}

In this section we introduce a few basic notions. Let $\mathcal{S}$ be the set of all sequences $\mathbf{s}=(s_n)_{n\in\Z}$ of real numbers. We say that a linear subspace $B\subset \mathcal{S}$ is a \emph{normed sequence space} if there exists a norm $\lVert \cdot \rVert_B \colon B \to \R_0^+$ such that if $\mathbf{s}'\in B$ and $\lvert s_n\rvert \le \lvert s_n'\rvert$ for $n\in \Z$, then $\mathbf{s}\in B$ and $\lVert \mathbf{s}\rVert_B \le \lVert \mathbf{s}'\rVert_B$. If in addition $(B, \lVert \cdot \rVert_B)$ is complete, we say that $B$ is a \emph{Banach sequence space}.

Let $B$ be a Banach sequence space. We say that $B$ is \emph{admissible} if:
\begin{enumerate}
\item
$\chi_{\{n\}} \in B$ and $\lVert \chi_{\{n\}}\rVert_B >0$ for $n\in \Z$, where $\chi_A$ denotes the characteristic function of the set $A\subset \Z$;
\item
for each $\mathbf{s}=(s_n)_{n\in \Z}\in B$ and $m\in \Z$, the sequence $\mathbf{s}^m=(s_n^m)_{n\in \Z}$ defined by $s_n^m=s_{n+m}$ belongs to $B$ and there exists $N>0$ such that $\lVert \mathbf{s}^m \rVert_B \le N \lVert \mathbf{s}\rVert_B$ for $\mathbf{s}\in B$ and $m\in \Z$.
\end{enumerate}

We present some examples of Banach sequence spaces.

\begin{example}
The set $l^\infty =\{ \mathbf{s} \in \mathcal{S} : \sup_{n\in \Z} \lvert s_n \rvert < +\infty \}$ is a Banach sequence space when equipped with the norm $\lVert \mathbf{s} \rVert =\sup_{n\in \Z} \lvert s_n \rvert$.
\end{example}

\begin{example}
For each $p\in [1, \infty )$, the set $l^p= \{ \mathbf{s} \in \mathcal{S}: \sum_{n\in \Z} \lvert s_n \rvert^p <+\infty \}$ is a Banach sequence space when equipped with the norm $\lVert \mathbf{s} \rVert= (\sum_{n\in \Z} \lvert s_n \rvert^p )^{1/p}$.
\end{example}

\begin{example}
Let $\phi \colon (0, +\infty) \to (0, +\infty]$ be a nondecreasing nonconstant left-continuous function. We set $\psi(t)=\int_0^t \phi(s) \, ds$  for $t\ge 0$. Moreover, for each $\mathbf s \in \mathcal S$, let $M_\phi (\mathbf s)=\sum_{n\in \Z} \psi(\lvert s_n \rvert)$. Then
\[
B=\bigl\{ \mathbf s \in \mathcal S : M_\phi (c \mathbf s)<+\infty \ \text{for some} \ c>0 \bigr\}
\]
is a Banach sequence space when equipped with the norm
\[
\lVert \mathbf s \rVert=\inf \bigl\{ c>0 : M_\phi ( \mathbf s / c ) \le 1 \bigr\}.
\]
\end{example}

We need the following auxiliary results.

\begin{proposition}\label{l2}
Let $B$ be an admissible Banach sequence space.
\begin{enumerate}
\item
If $\mathbf{s}^1=(s_n^1)_{n\in \Z}$ and $\mathbf{s}^2=(s_n^2)_{n\in \Z}$ are sequences in $\mathcal{S}$ and $s_n^1=s_n^2$ for all but finitely many $n\in \Z$, then $\mathbf{s}^1 \in B$ if and only if $\mathbf{s}^2 \in B$.
\item
If $\mathbf{s}^n \to \mathbf{s}$ in $B$ when $n \to \infty$, then $s_m^n \to s_m$ when $n \to \infty$, for $m\in \Z$.
\item
For each $\mathbf{s}\in B$ and $\lambda \in (0,1)$, the sequences $\mathbf{s}^1$ and $\mathbf{s}^2$ defined by
\[
s_n^1=\sum_{m\ge 0} \lambda^m s_{n-m} \quad \text{and} \quad s_n^2=\sum_{m\ge 1} \lambda^m s_{n+m}
\]
are in $B$, and
\begin{equation}\label{bnd}
\lVert \mathbf{s}^1 \rVert_B \le \frac{N}{1-\lambda} \lVert \mathbf{s} \rVert_B \quad \text{and} \quad \lVert \mathbf{s}^2 \rVert_B \le \frac{N\lambda}{1-\lambda} \lVert \mathbf{s} \rVert_B.
\end{equation}
\end{enumerate}
\end{proposition}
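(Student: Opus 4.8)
The plan is to establish the three parts in the order stated, since the third and main one relies on the first two. For part (1) it suffices to show that every sequence with only finitely many nonzero terms belongs to $B$: such a sequence is a finite linear combination of the sequences $\chi_{\{n\}}$, each of which lies in $B$ by the first admissibility condition, and $B$ is a linear subspace of $\mathcal{S}$. If $s_n^1=s_n^2$ for all $n$ outside a finite set, then $\mathbf{s}^1-\mathbf{s}^2$ has finitely many nonzero terms, hence belongs to $B$, and so $\mathbf{s}^1\in B$ if and only if $\mathbf{s}^2=\mathbf{s}^1-(\mathbf{s}^1-\mathbf{s}^2)\in B$.

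For part (2), fix $m\in\Z$. The sequence $(s_m^n-s_m)\chi_{\{m\}}$ is dominated termwise in absolute value by the sequence $\mathbf{s}^n-\mathbf{s}\in B$, so by the solidity and homogeneity of $\lVert\cdot\rVert_B$ one has
\[
|s_m^n-s_m|\,\lVert\chi_{\{m\}}\rVert_B=\lVert(s_m^n-s_m)\chi_{\{m\}}\rVert_B\le\lVert\mathbf{s}^n-\mathbf{s}\rVert_B.
\]
Since $\lVert\chi_{\{m\}}\rVert_B>0$ by the first admissibility condition, dividing by it gives $s_m^n\to s_m$ as $n\to\infty$.

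For part (3), fix $\mathbf{s}\in B$ and $\lambda\in(0,1)$, and for each integer $K\ge0$ consider the partial sums $\mathbf{t}^K=(t^K_n)_{n\in\Z}$ and $\mathbf{u}^K=(u^K_n)_{n\in\Z}$ with $t^K_n=\sum_{m=0}^K\lambda^m s_{n-m}$ and $u^K_n=\sum_{m=1}^K\lambda^m s_{n+m}$. In the notation of the second admissibility condition these are the \emph{finite} linear combinations $\mathbf{t}^K=\sum_{m=0}^K\lambda^m\mathbf{s}^{-m}$ and $\mathbf{u}^K=\sum_{m=1}^K\lambda^m\mathbf{s}^m$, which therefore belong to $B$. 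Using $\lVert\mathbf{s}^m\rVert_B\le N\lVert\mathbf{s}\rVert_B$ for every $m\in\Z$, one obtains $\lVert\mathbf{t}^K\rVert_B\le\frac{N}{1-\lambda}\lVert\mathbf{s}\rVert_B$ and $\lVert\mathbf{u}^K\rVert_B\le\frac{N\lambda}{1-\lambda}\lVert\mathbf{s}\rVert_B$ uniformly in $K$, as well as $\lVert\mathbf{t}^L-\mathbf{t}^K\rVert_B\le\frac{N\lambda^{K+1}}{1-\lambda}\lVert\mathbf{s}\rVert_B$ for $K<L$ and the analogous bound for $\mathbf{u}$. Hence $(\mathbf{t}^K)_{K\ge0}$ and $(\mathbf{u}^K)_{K\ge0}$ are Cauchy sequences in the Banach space $B$ and converge there to some $\mathbf{t},\mathbf{u}\in B$ with $\lVert\mathbf{t}\rVert_B\le\frac{N}{1-\lambda}\lVert\mathbf{s}\rVert_B$ and $\lVert\mathbf{u}\rVert_B\le\frac{N\lambda}{1-\lambda}\lVert\mathbf{s}\rVert_B$. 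By part (2), convergence in $B$ forces termwise convergence, so $t^K_n\to t_n$ and $u^K_n\to u_n$ for each $n$; this is exactly the statement that the series defining $\mathbf{s}^1$ and $\mathbf{s}^2$ converge, with $\mathbf{s}^1=\mathbf{t}$ and $\mathbf{s}^2=\mathbf{u}$, so \eqref{bnd} holds.

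The one point I expect to require care is the very convergence of the two series in part (3): since an element of $B$ need not be bounded — the norms $\lVert\chi_{\{n\}}\rVert_B$ may decay — this cannot be read off from a direct termwise estimate. The argument above sidesteps this by first producing convergence of the partial sums in $B$, using completeness together with the geometric Cauchy estimate furnished by the uniform shift bound, and only then invoking part (2), which simultaneously yields the pointwise limits and identifies them with the corresponding elements of $B$.
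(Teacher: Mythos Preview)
Your proof is correct and follows essentially the same route as the paper: parts (1) and (2) are argued identically, and in part (3) both proofs show that the partial sums form a Cauchy sequence in $B$ via the uniform shift bound and then invoke part (2) to identify the $B$-limit termwise. The only cosmetic difference is that the paper first replaces $\mathbf{s}$ by its absolute value $\mathbf{v}=(|s_n|)_{n\in\Z}$, sums the shifts of $\mathbf{v}$, and then appeals to solidity to dominate $\mathbf{s}^1$, whereas you sum the shifts of $\mathbf{s}$ directly; your version is slightly more streamlined but otherwise the same argument.
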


\begin{proof}
1. Assume that $\mathbf{s}^1 \in B$ and let $I\subset \Z$ be the finite set of all integers $n\in \Z$ such that $s_n^1 \neq s_n^2$. We define $\mathbf{v}=(v_n)_{n\in \Z}$ by $v_n=0$ if $n\notin I$ and $v_n=s_n^2-s_n^1$ if $n\in I$.
Since $B$ is an admissible Banach sequence space, we have $\mathbf{v}\in B$ and thus $\mathbf{s}^2=\mathbf{s}^1+\mathbf{v} \in B$.

2. We have
\[
\lvert s_m^n-s_m\rvert \chi_{\{m\}}(k) \le \lvert s_k^n-s_k\rvert
\]
for $k\in \Z$ and $n\in \N$. By the definition of a normed sequence space, we obtain
\[
\lvert s_m^n-s_m\rvert \le \frac{N}{\lVert \chi_{\{0\}}\rVert_B}\lVert s^n-s\rVert_B
\]
for $n\in \Z$ and the conclusion follows.

3. We define a sequence $\mathbf{v}=(v_n)_{n\in \Z}$ by $v_n=\lvert s_n \rvert$ for $n\in \Z$. Clearly, $\mathbf{v}\in B$ and $\lVert \mathbf{v}\rVert_B=\lVert \mathbf{s}\rVert_B$. Moreover,
\[
\sum_{m\ge 0} \lambda^m \lVert \mathbf{v}^{-m}\rVert_B \le N\sum_{m\ge 0}\lambda^m\lVert \mathbf{v}\rVert_B=\frac{N}{1-\lambda}\lVert \mathbf{s}\rVert_B< +\infty.
\]
Since $B$ is complete, the series $\sum_{m\ge 0} \lambda^m \mathbf{v}^{-m}$ converges to some sequence $\mathbf{x}=(x_n)_{n\in \Z} \in B$. It follows from the second property that
\[
x_n=\sum_{m\ge 0}\lambda^m\lvert s_{n-m}\rvert
\]
for $n\in \Z$. Since $\lvert s_n^1\rvert \le \lvert x_n \rvert$ for $n \in \Z$, we conclude that $\mathbf{s}^1 \in B$ and $\lVert \mathbf{s}^1\rVert_B \le \lVert \mathbf{x}\rVert_B$, which yields that the first inequality in~\eqref{bnd} holds. One can show in a similar manner that $\mathbf{s}^2 \in B$ and that the second inequality in~\eqref{bnd} holds.
\end{proof}

Now let $(X, \lVert \cdot \rVert)$ be a Banach space and let $\lVert \cdot \rVert_n$, for $n\in \Z$, be a sequence of norms on $X$ such that $\lVert \cdot \rVert_n$ is equivalent to $\lVert \cdot \rVert$ for each $n \in \Z$. For an admissible space $B$, let
\[
Y_B=\big\{\mathbf{x}=(x_n)_{n\in \Z} \subset X: (\lVert x_n \rVert_n)_{n\in \Z} \in B\big\}.
\]
For $\mathbf{x}\in Y_B$, we define
\[
\lVert \mathbf{x}\rVert_{Y_B} =\lVert (\lVert x_n \rVert_n)_{n\in \Z}\rVert_B.
\]

\begin{proposition}
$(Y_B, \lVert \cdot \rVert_{Y_B})$ is a Banach space.
\end{proposition}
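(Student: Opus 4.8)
The plan is to verify successively that $Y_B$ is a vector space, that $\lVert\cdot\rVert_{Y_B}$ is a norm, and — the substantive point — that $(Y_B,\lVert\cdot\rVert_{Y_B})$ is complete. For the first two claims I would use only that $B$ is a linear subspace of $\mathcal S$ together with the defining property that $\lvert s_n\rvert\le\lvert s'_n\rvert$ for all $n$ and $\mathbf s'\in B$ imply $\mathbf s\in B$ and $\lVert\mathbf s\rVert_B\le\lVert\mathbf s'\rVert_B$. Indeed, if $\mathbf x,\mathbf y\in Y_B$ then $(\lVert x_n\rVert_n)_n$ and $(\lVert y_n\rVert_n)_n$ lie in $B$, hence so does their sum, and $\lVert x_n+y_n\rVert_n\le\lVert x_n\rVert_n+\lVert y_n\rVert_n$ then gives $\mathbf x+\mathbf y\in Y_B$ with $\lVert\mathbf x+\mathbf y\rVert_{Y_B}\le\lVert\mathbf x\rVert_{Y_B}+\lVert\mathbf y\rVert_{Y_B}$; closure under scalars and homogeneity follow from $(\lVert\lambda x_n\rVert_n)_n=\lvert\lambda\rvert(\lVert x_n\rVert_n)_n$, while $\lVert\mathbf x\rVert_{Y_B}=0$ forces $(\lVert x_n\rVert_n)_n=\mathbf 0$ in $\mathcal S$ and hence $x_n=0$ for every $n$.

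For completeness, let $(\mathbf x^k)_k$ be Cauchy in $Y_B$. I would first pass to a subsequence $\mathbf y^i=\mathbf x^{k_i}$ with $\lVert\mathbf y^{i+1}-\mathbf y^i\rVert_{Y_B}<2^{-i}$ for all $i$, and set $a_n^i=\lVert y_n^{i+1}-y_n^i\rVert_n$, so that $\mathbf a^i=(a_n^i)_n\in B$ and $\lVert\mathbf a^i\rVert_B<2^{-i}$. The key idea is to push the resulting absolute convergence into spaces already known to be complete rather than invoking completeness of $Y_B$ itself. Since $B$ is complete, the series $\sum_{i\ge1}\mathbf a^i$ converges in $B$ to some $\mathbf b\in B$; by Proposition~\ref{l2}(2) its $n$-th coordinate is $b_n=\sum_{i\ge1}a_n^i<+\infty$, so for each fixed $n$ the series $\sum_{i\ge1}(y_n^{i+1}-y_n^i)$ converges absolutely in $(X,\lVert\cdot\rVert_n)$, which is complete because $\lVert\cdot\rVert_n$ is equivalent to $\lVert\cdot\rVert$. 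Hence $y_n^i\to x_n$ for some $x_n\in X$, and passing to the limit in the partial sums $y_n^j-y_n^i=\sum_{l=i}^{j-1}(y_n^{l+1}-y_n^l)$ gives $\lVert x_n-y_n^i\rVert_n\le\sum_{l\ge i}a_n^l$.

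It then remains to recognise $\mathbf x=(x_n)_n$ as the limit of $(\mathbf y^i)$ in $Y_B$. The tail $\mathbf b-\sum_{l=1}^{i-1}\mathbf a^l=\sum_{l\ge i}\mathbf a^l$ lies in $B$ with $\lVert\sum_{l\ge i}\mathbf a^l\rVert_B\le\sum_{l\ge i}2^{-l}=2^{-i+1}$, and by Proposition~\ref{l2}(2) its $n$-th coordinate is $\sum_{l\ge i}a_n^l$; so the pointwise bound $\lVert x_n-y_n^i\rVert_n\le\sum_{l\ge i}a_n^l$ and the monotonicity property in the definition of $B$ yield $\mathbf x-\mathbf y^i\in Y_B$ with $\lVert\mathbf x-\mathbf y^i\rVert_{Y_B}\le2^{-i+1}$. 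In particular $\mathbf x=\mathbf y^i+(\mathbf x-\mathbf y^i)\in Y_B$ and $\mathbf y^i\to\mathbf x$ in $Y_B$; since a Cauchy sequence with a convergent subsequence converges to the same limit, $\mathbf x^k\to\mathbf x$. I expect the completeness argument to be the main obstacle: the delicate point is precisely to avoid the circular use of ``absolutely convergent series converge in $Y_B$'' and instead build the candidate limit coordinatewise from completeness of $(X,\lVert\cdot\rVert_n)$ while controlling its tail in the $B$-norm, the monotonicity of $\lVert\cdot\rVert_B$ being the bridge that carries the estimate back to $Y_B$.
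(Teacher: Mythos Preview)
Your proof is correct. The argument is careful and complete; in particular you rightly avoid the circularity of assuming $Y_B$ is complete when summing absolutely convergent series, and your use of the monotonicity of $\lVert\cdot\rVert_B$ to pass from the coordinate-wise bound $\lVert x_n-y_n^i\rVert_n\le\sum_{l\ge i}a_n^l$ back to a $Y_B$-estimate is exactly the right bridge.

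The paper's proof follows the same broad outline (coordinate-wise limits plus completeness of~$B$) but executes it differently. Instead of extracting a rapidly Cauchy subsequence and summing telescoping differences, the paper works directly with the full Cauchy sequence and observes, via the reverse triangle inequality $\bigl|\lVert x_n^k\rVert_n-\lVert x_n^l\rVert_n\bigr|\le\lVert x_n^k-x_n^l\rVert_n$, that the scalar sequences $\mathbf s^k=(\lVert x_n^k\rVert_n)_n$ are themselves Cauchy in~$B$; their limit $\mathbf s$ has coordinates $\lVert x_n\rVert_n$ by Proposition~\ref{l2}(2), giving $\mathbf x\in Y_B$ at once. The paper then simply asserts that $\mathbf x^k-\mathbf x\to 0$ in $Y_B$, leaving the verification to the reader. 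Your subsequence argument is slightly longer but has the advantage that the convergence $\mathbf y^i\to\mathbf x$ in $Y_B$ comes out with an explicit rate and no hand-waving; the paper's shortcut through the reverse triangle inequality is more economical for showing $\mathbf x\in Y_B$ but less transparent on the final convergence step.
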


\begin{proof}
Let $(\mathbf{x}^k)_{k\in \N}$ be a Cauchy sequence in $Y_B$. Repeating arguments in the proof of Proposition~\ref{l2}, one can show that $(x_n^k)_{k\in \N}$ is a Cauchy sequence in $X$ for each $n\in \Z$.
Let
\[
x_n=\lim_{k \to \infty} x_n^k \quad \text{for} \quad n\in \Z
\]
and let $\mathbf{s}^k=(\lVert x_n^k\rVert_n)_{n\in \Z} \in B$ for $k\in \N$. Since
\[
\big\lvert \lVert x_n^k\rVert_n -\lVert x_n^l\rVert_n \big\rvert \le \lVert x_n^k-x_n^l\rVert_n \quad \text{for} \quad n \in \Z,
\]
we conclude that
\[
\lVert \mathbf{s}^k-\mathbf{s}^l \rVert_B \le \lVert \mathbf{x}^k-\mathbf{x}^l\rVert_{Y_B}\quad \text{for} \quad k, l\in \N.
\]
Hence,
$(\mathbf{s}^k)_{k\in \N}$ is a Cauchy sequence in $B$. Since $B$ is complete, it follows from property~2 in Proposition~\ref{l2} that $\mathbf{s}^k \to \mathbf s$ in $B$ when $k \to \infty$, where $s_n=\lVert x_n\rVert_n$ for $n\in \Z$. In particular, $\mathbf x=(x_n)_{n\in \Z} \in Y_B$. One can easily verify that the sequence $(\mathbf x^k-\mathbf x)_{k\in \N}$ converges to $0$ in $Y_B$, which implies that $(\mathbf{x}^k)_{k\in \N}$ converges to $\mathbf x$ in~$Y_B$.
\end{proof}

\section{Admissibility and exponential dichotomies}\label{S3}

In this section we consider the notion of an exponential dichotomy with respect to a sequence of norms and we characterize it in terms of the invertibility of a certain linear operator.

\subsection{Basic notions}

Let $X$ be a Banach space and let $L(X)$ be the set of all bounded linear operators from $X$ to itself.
Given a sequence $(A_m)_{m\in \Z}$ in $B(X)$, let
\begin{equation}\label{*AC}
\cA(n,m)=\begin{cases}
A_{n-1} \cdots A_m & \text{if $n > m$,} \\
\Id & \text{if $n=m$}.
\end{cases}
\end{equation}

\begin{definition}\label{DEF}
We say that $(A_m)_{m\in \Z}$ admits an \emph{exponential dichotomy} with respect to the sequence of norms $\lVert \cdot \rVert_m $ if:
\begin{enumerate}
\item
there exist projections $P_m\colon X\to X$ for each $m\in \Z$ satisfying
\begin{equation}\label{*AX}
A_m P_m=P_{m+1}A_m \quad \text{for} \quad m\in \Z
\end{equation}
such that each map $A_m \rvert \ker P_m \colon \ker P_m \to \ker P_{m+1}$ is invertible;
\item
there exist constants $D>0$ and $0<\lambda<1<\mu$ such that for each $x \in X$ and $n,m \in \Z$ we have
\begin{equation}\label{eq:1}
\lVert \cA(n,m)P_m x\rVert_n \le D \lambda^{n-m} \lVert x\rVert_m \quad \text{for} \quad n\ge m
\end{equation}
and
\begin{equation}\label{eq:2}
\lVert \cA(n,m)Q_m x\rVert_n \le D \mu^{n-m} \lVert x\rVert_m\quad \text{for} \quad n\le m,
\end{equation}
where $Q_m=\Id-P_m$ and
\[
\cA(n,m)=\left(\cA(m,n)\rvert \ker P_n\right)^{-1} \colon \ker P_m \to \ker P_n
\]
for $n<m$.
\end{enumerate}
\end{definition}

More generally, one can consider the notion of an exponential dichotomy for sequences of linear operators between different spaces. Namely, let $X_n=(X_n,\lVert \cdot \rVert)$, for $n \in \Z$, be pairwise isomorphic Banach spaces. Given a sequence of bounded linear operators $A_m \colon X_m \to X_{m+1}$, for $m \in \Z$, one can define $\cA(n,m) \colon X_m \to X_n$ by~\eqref{*AC} and introduce a corresponding notion of an exponential dichotomy, with projections $P_m \colon X_m \to X_m$ for $m \in \Z$. All the results obtained in this section hold verbatim in this general setting, but we prefer avoiding the cumbersome notation.

Now let $B$ be a Banach sequence space. Our main aim is to characterize the notion of an exponential dichotomy with respect to a sequence of norms in terms of the invertibility of the operator $T_B \colon \mathcal{D}(T_B) \subset Y_B \to Y_B$ defined by
\[
(T_B \mathbf{x})_n=x_n-A_{n-1}x_{n-1}, \quad n\in \Z,
\]
on the domain $\mathcal{D}(T_B)$ formed by all vectors $\mathbf{x}\in Y_B$ such that $T_B \mathbf{x}\in Y_B$.

\begin{proposition}\label{propo.1}
The linear operator $T_B \colon \mathcal{D}(T_B) \subset Y_B \to Y_B$ is closed.
\end{proposition}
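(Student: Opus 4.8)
The plan is to verify directly that the graph of $T_B$ is closed in $Y_B \times Y_B$. So I would start with a sequence $(\mathbf{x}^k)_{k\in\N}$ in $\mathcal{D}(T_B)$ such that $\mathbf{x}^k \to \mathbf{x}$ in $Y_B$ and $T_B \mathbf{x}^k \to \mathbf{y}$ in $Y_B$, for some $\mathbf{x}, \mathbf{y} \in Y_B$. The goal is to show that $\mathbf{x} \in \mathcal{D}(T_B)$ and $T_B \mathbf{x} = \mathbf{y}$. Since the definition of $\mathcal{D}(T_B)$ only requires $\mathbf{x} \in Y_B$ (which holds by assumption) together with $T_B\mathbf{x} \in Y_B$, both assertions reduce to checking that $(T_B\mathbf{x})_n = y_n$ for every $n \in \Z$; indeed, once this coordinatewise identity is established we get $T_B\mathbf{x} = \mathbf{y} \in Y_B$, hence $\mathbf{x} \in \mathcal{D}(T_B)$, and the equality $T_B\mathbf{x} = \mathbf{y}$ simultaneously.

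To pass to coordinates I would invoke property~2 in Proposition~\ref{l2}, applied in the space $Y_B$: more precisely, the estimate proved there shows that convergence in $Y_B$ forces $\|x_n^k - x_n\|_n \to 0$ as $k \to \infty$ for each fixed $n \in \Z$, that is, $x_n^k \to x_n$ in $X_n$. Applying the same observation to the convergence $T_B \mathbf{x}^k \to \mathbf{y}$ gives $(T_B\mathbf{x}^k)_n \to y_n$ in $X_n$ for each $n$. Now fix $n \in \Z$. Since $A_{n-1} \colon X_{n-1} \to X_n$ is bounded and $x_{n-1}^k \to x_{n-1}$, we have $A_{n-1} x_{n-1}^k \to A_{n-1} x_{n-1}$. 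Therefore
\[
(T_B\mathbf{x})_n = x_n - A_{n-1}x_{n-1} = \lim_{k\to\infty}\bigl(x_n^k - A_{n-1}x_{n-1}^k\bigr) = \lim_{k\to\infty}(T_B\mathbf{x}^k)_n = y_n,
\]
which is exactly what we needed.

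There is no real obstacle here; the only point requiring a little care is the transition from norm convergence in the ambient space $Y_B$ to convergence of individual coordinates in $X_n$. This is precisely the content of property~2 in Proposition~\ref{l2} (and its analogue used in the proof that $Y_B$ is a Banach space), so I would simply cite that. Everything else is the continuity of the finitely many linear maps involved in a single coordinate of $T_B$ — namely the identity and $A_{n-1}$ — together with the uniqueness of limits in the Banach space $X_n$.
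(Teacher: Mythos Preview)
Your proof is correct and follows essentially the same approach as the paper's own proof: take a sequence in the graph, use property~2 of Proposition~\ref{l2} to pass to coordinatewise convergence, and then invoke the continuity of $A_{n-1}$ to identify the limit coordinate by coordinate. The paper presents this in a slightly more compressed form, but the ideas and the key citation are identical.
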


\begin{proof}
Let $(\mathbf{x}^k)_{k\in \N}$ be a sequence in $\mathcal{D}(T_B)$ converging to $\mathbf{x}\in Y_B$ such that $T_B\mathbf{x}^k$ converges to $\mathbf{y}\in Y_B$. It follows from the definition of $Y_B$ and property~2 in Proposition~\ref{l2} that
\[
x_n-A_{n-1}x_{n-1}=\lim_{k\to \infty}(x_n^k-A_{n-1}x_{n-1}^k)=\lim_{k\to \infty}(T_B\mathbf{x}^k)_n=y_n
\]
for $n\in \Z$, using the continuity of the linear operator $A_{n-1}$. Therefore, $\mathbf{x}\in \mathcal{D}(T_B)$ and $T_B\mathbf{x}=\mathbf{y}$. This shows that the operator $T_B$ is closed.
\end{proof}

For $\mathbf{x}\in \mathcal{D}(T_B)$ we consider the graph norm
\[
\lVert \mathbf{x}\rVert_{Y_B}'=\lVert \mathbf{x}\rVert_{Y_B}+\lVert T\mathbf{x}\rVert_{Y_B}.
\]
Clearly, the operator
\[
T_B\colon (\mathcal{D}(T_B),\lVert \cdot \rVert_{Y_B}') \to (Y,\lVert \cdot \rVert_{Y_B})
\]
is bounded and from now on we denote it simply by~$T_B$. It follows from Proposition~\ref{propo.1} that $(\mathcal{D}(T_B),\lVert \cdot \rVert'_{Y_B})$ is a Banach space.

\subsection{Characterization of exponential dichotomies}

In this section we characterize the notion of an exponential dichotomy with respect to a sequence of norms in terms of the invertibility of the operator $T_B$.

\begin{theorem}\label{thm.1}
If the sequence $(A_m)_{m\in \Z}$ admits an exponential dichotomy with respect to the sequence of norms~$\lVert \cdot \rVert_m$, then the operator $T_B$ is invertible.
\end{theorem}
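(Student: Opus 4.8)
The plan is to show that $T_B$ is a bounded bijection from $(\mathcal{D}(T_B),\lVert\cdot\rVert'_{Y_B})$ onto $Y_B$, by writing down explicitly, for each $\mathbf{y}\in Y_B$, the unique solution of $T_B\mathbf{x}=\mathbf{y}$ via the Green's-function-type operator associated with the dichotomy. Given $\mathbf{y}=(y_n)_{n\in\Z}\in Y_B$, I would set
\[
x_n=\sum_{k\le n}\cA(n,k)P_ky_k-\sum_{k>n}\cA(n,k)Q_ky_k,\qquad n\in\Z .
\]
In the second sum $\cA(n,k)$ with $k>n$ is applied only to $Q_ky_k\in\ker P_k$, where it is defined, so the noninvertibility of the $A_m$ causes no difficulty. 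Since $\mathbf{y}\in Y_B$ the sequence $(\lVert y_k\rVert_k)_{k\in\Z}$ lies in $B$ and hence is bounded (arguing as in the proof of property~2 of Proposition~\ref{l2}); combining this with \eqref{eq:1} and \eqref{eq:2}, the latter rewritten as $\lVert\cA(n,k)Q_ky_k\rVert_n\le D\mu^{-(k-n)}\lVert y_k\rVert_k$ for $k\ge n$ with $\mu^{-1}<1$, shows that the partial sums are Cauchy in $(X,\lVert\cdot\rVert_n)$, hence convergent. A direct computation using $\cA(n,k)=A_{n-1}\cA(n-1,k)$ for $k\le n-1$, the identity $A_{n-1}\cA(n-1,n)=\Id$ on $\ker P_n$, and \eqref{*AX} then yields $x_n-A_{n-1}x_{n-1}=y_n$ for all $n$.

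Next I would check that $\mathbf{x}=(x_n)_{n\in\Z}\in Y_B$. Writing $s_k=\lVert y_k\rVert_k$, so that $\mathbf{s}\in B$ with $\lVert\mathbf{s}\rVert_B=\lVert\mathbf{y}\rVert_{Y_B}$, the bounds \eqref{eq:1} and \eqref{eq:2} give
\[
\lVert x_n\rVert_n\le D\sum_{j\ge0}\lambda^{j}s_{n-j}+D\sum_{j\ge1}\mu^{-j}s_{n+j},\qquad n\in\Z .
\]
By property~3 of Proposition~\ref{l2}, applied first with $\lambda$ and then with $\mu^{-1}$, the two sequences on the right-hand side belong to $B$, with norms at most $\frac{N}{1-\lambda}\lVert\mathbf{s}\rVert_B$ and $\frac{N\mu^{-1}}{1-\mu^{-1}}\lVert\mathbf{s}\rVert_B=\frac{N}{\mu-1}\lVert\mathbf{s}\rVert_B$. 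Since $B$ is a normed sequence space, it follows that $(\lVert x_n\rVert_n)_{n\in\Z}\in B$, that is $\mathbf{x}\in Y_B$, and $\lVert\mathbf{x}\rVert_{Y_B}\le D\bigl(\tfrac{N}{1-\lambda}+\tfrac{N}{\mu-1}\bigr)\lVert\mathbf{y}\rVert_{Y_B}$. As $T_B\mathbf{x}=\mathbf{y}\in Y_B$ we get $\mathbf{x}\in\mathcal{D}(T_B)$ and $\lVert\mathbf{x}\rVert'_{Y_B}=\lVert\mathbf{x}\rVert_{Y_B}+\lVert\mathbf{y}\rVert_{Y_B}$, so $\mathbf{y}\mapsto\mathbf{x}$ is bounded from $Y_B$ into $(\mathcal{D}(T_B),\lVert\cdot\rVert'_{Y_B})$; in particular $T_B$ is onto.

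For injectivity, suppose $\mathbf{x}\in\mathcal{D}(T_B)$ with $T_B\mathbf{x}=0$, so $x_n=\cA(n,m)x_m$ for $n\ge m$. Taking $n=m$ in \eqref{eq:1} and \eqref{eq:2} gives $\lVert P_m\rVert_m\le D$ and $\lVert Q_m\rVert_m\le D$ for all $m$, and $C:=\sup_m\lVert x_m\rVert_m<\infty$ as before. Fixing $n$ and letting $m\to-\infty$,
\[
\lVert P_nx_n\rVert_n=\lVert\cA(n,m)P_mx_m\rVert_n\le D\lambda^{n-m}\lVert P_mx_m\rVert_m\le D^2C\lambda^{n-m}\to0,
\]
so $P_nx_n=0$; similarly, from $Q_nx_n=\cA(n,m)Q_mx_m$ for $m\ge n$ together with \eqref{eq:2}, letting $m\to+\infty$ gives $Q_nx_n=0$. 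Hence $\mathbf{x}=0$, so $T_B$ is injective. Being a bounded bijection between the Banach spaces $(\mathcal{D}(T_B),\lVert\cdot\rVert'_{Y_B})$ and $Y_B$, $T_B$ is then invertible, with inverse the bounded solution operator constructed above. I expect the only points needing care to be the facts that members of $B$ are bounded sequences and that the two convolution sums remain in $B$ with controlled norm — both handled by Proposition~\ref{l2} — together with the bookkeeping, in the noninvertible setting, that $\cA(n,k)$ with $k>n$ acts only on $\ker P_k$; none of this is a serious obstacle.
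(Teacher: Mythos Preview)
Your proposal is correct and follows essentially the same route as the paper: the Green's-function formula for the solution, the use of property~3 of Proposition~\ref{l2} to place $(\lVert x_n\rVert_n)$ back in $B$, and the stable/unstable splitting together with the pointwise bound $\lVert x_m\rVert_m\le \frac{N}{\alpha_B}\lVert\mathbf{x}\rVert_{Y_B}$ for injectivity are exactly the ingredients the paper uses. The only cosmetic differences are that you make the convergence of the defining series and the explicit norm bound for the inverse more visible, and in the injectivity step you insert an extra factor of~$D$ by bounding $\lVert P_m x_m\rVert_m$ first (applying \eqref{eq:1} directly with $x=x_m$ gives the slightly sharper $D\lambda^{n-m}\lVert x_m\rVert_m$); none of this affects the argument.
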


\begin{proof}
In order to establish the injectivity of the operator $T_B$, assume that $T_B\mathbf{x}=0$ for some $\mathbf{x}\in Y_B$. Then $x_n=A_{n-1}x_{n-1}$ for $n\in \Z$. Let $x_n^s=P_n x_n$ and $x_n^u=Q_n x_n$. We have $x_n=x_n^s+x_n^u$ and it follows from~\eqref{*AX} that
\[
x_n^s=A_{n-1}x_{n-1}^s \quad \text{and} \quad
x_n^u=A_{n-1}x_{n-1}^u
\]
for $n\in \Z$. Moreover, $x_k^s= \cA(k,k-m)x_{k-m}^s$ for $m\ge 0$ and hence,
\[
\begin{split}
\lVert x_k^s\rVert_k &=\lVert \cA(k,k-m)x_{k-m}^s\rVert_k \\
&=\lVert \cA(k,k-m)P_{k-m}x_{k-m}\rVert_k \\
&\le D \lambda^m \lVert x_{k-m}\rVert_{k-m} \\
&\le \frac{DN}{\alpha_B} \lambda^{m}\lVert \mathbf{x}\rVert_{Y_B},
\end{split}
\]
where $\alpha_B=\lVert \chi_{\{0\}} \rVert_B$. Letting $m \to \infty$ in the last term yields that $x_k^s=0$ for $k\in \Z$.
Similarly, $x_k^u=\cA(k,k+m)x_{k+m}^u$ for $m\ge 0$ and hence,
\[
\begin{split}
\lVert x_k^u\rVert_k &= \lVert \cA(k,k+m)x_{k+m}^u\rVert_k \\
&=\lVert \cA(k,k+m)Q_{k+m}x_{k+m}\rVert_k \\
&\le D\mu^{-m}\lVert x_{k+m}\rVert_{k+m} \\
&\le \frac{DN}{\alpha_B}\mu^{-m}\lVert \mathbf{x}\rVert_{Y_B}.
\end{split}
\]
Therefore, $x_k^u=0$ for $k\in \Z$ and hence $\mathbf{x}=0$. This shows that the operator $T_B$ is injective.

Now we show that $T_B$ is onto. Take $\mathbf{y}=(y_n)_{n\in \Z}\in Y_B$. For each $n\in \Z$, let
\[
x_n^1=\sum_{m\ge 0}\cA(n,n-m) P_{n-m} y_{n-m}
\]
and
\[
x_n^2=-\sum_{m\ge 1}\cA(n,n+m) Q_{n+m} y_{n+m}.
\]
We have
\[
\lVert x_n^1\rVert_n \le \sum_{m\ge 0}D\lambda^{m}\lVert y_{n-m}\rVert_{n-m} \quad \text{and} \quad \lVert x_n^2\rVert_n \le \sum_{m\ge 1}D\mu^{-m}\lVert y_{n+m}\rVert_{n+m}
\]
It follows from property~3 in Proposition~\ref{l2} that $(x_n^1)_{n\in \Z}$ and $(x_n^2)_{n\in \Z}$ belong to $Y_B$. Now let $x_n=x_n^1+x_n^2$ for $n \in \Z$ and $\mathbf{x}=(x_n)_{n\in \Z}$. Then $\mathbf{x}\in Y_B$ and one can easily verify that $T_B \mathbf{x}=\mathbf{y}$. This completes the proof of the theorem.
\end{proof}

Now we establish the converse of Theorem~\ref{thm.1}.

\begin{theorem}
If the operator $T_B$ is bijective, then the sequence $(A_m)_{m\in\Z}$ admits an exponential dichotomy with respect to the sequence of norms $\lVert \cdot \rVert_m$.
\end{theorem}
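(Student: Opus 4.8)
The plan is to reverse-engineer the projections and the dynamics from the inverse operator $T_B^{-1}$, which is bounded by the closed graph theorem (or rather, $T_B^{-1}\colon Y_B \to (\mathcal D(T_B),\lVert\cdot\rVert'_{Y_B})$ is bounded since $T_B$ is a bijective bounded operator between Banach spaces). For each $m\in\Z$ and each $v\in X$, I would feed $T_B$ the sequence $\mathbf y = v\,\chi_{\{m\}}$ (which lies in $Y_B$ since $\lVert\cdot\rVert_m$ is equivalent to $\lVert\cdot\rVert$ and by admissibility property~1), and set $\mathbf x = T_B^{-1}\mathbf y$. Then $\mathbf x$ satisfies $x_n = A_{n-1}x_{n-1}$ for all $n\neq m$ and $x_m - A_{m-1}x_{m-1} = v$. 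The idea is that the ``past'' values $(x_n)_{n\le m-1}$ should decay and the ``future'' values $(x_n)_{n\ge m}$ should also decay (being the forward orbit of $x_m$), and the splitting of $v$ as $x_m$ (a ``future-decaying'' part) plus $A_{m-1}x_{m-1}$ (the forward image of a ``past-decaying'' part) is precisely the stable/unstable decomposition at time $m$. Concretely, I would define $P_{m+1}v = A_m x_m$ and check directly that this is a bounded projection; invertibility of $A_m$ on $\ker P_m$ comes from the fact that for $\mathbf y$ supported away from indices $\le m$, the past part $(x_n)_{n\le m}$ gives a well-defined preimage.

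Once the projections $P_m$ are in place, the second and harder task is to extract the exponential estimates~\eqref{eq:1} and~\eqref{eq:2}. This is where the paper's advertised novelty lies --- the abstract says the bounds are obtained ``in a single step'' and ``differs from the standard technique of substituting test sequences.'' So rather than plugging in cleverly chosen scalar test sequences, I expect the argument to go through a norm bound on $T_B^{-1}$ together with the shift-invariance (admissibility property~2, with constant $N$) of $Y_B$. The key trick I would try: given a single vector $x \in P_m X$, consider the genuine orbit $x_n = \cA(n,m)x$ for $n\ge m$ (extended by a suitable past, or truncated), show this orbit lies in $Y_B$ with $\lVert\mathbf x\rVert_{Y_B}$ controlled by $\lVert T_B\mathbf x\rVert_{Y_B}$ via $\lVert T_B^{-1}\rVert$, and then use that $T_B\mathbf x$ is supported at a single index (equal to $\pm x$ there), so $\lVert T_B\mathbf x\rVert_{Y_B} \le \lVert x\rVert_m \cdot \lVert\chi_{\{0\}}\rVert_B \cdot N$. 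This yields $\sum_n \lVert\cA(n,m)x\rVert_n\,\chi$-type bounds in $B$-norm. The passage from ``the whole forward orbit is $\ell$-summable (in the $B$ sense) uniformly'' to ``each term decays geometrically'' is the classical discrete Datko--Pazy / Zabczyk step, and here one must run it with respect to the sequence of norms and the abstract space $B$; I would use shift-invariance to get a uniform bound $\lVert(\lVert\cA(m+j,m)x\rVert_{m+j})_j\rVert_{B} \le C\lVert x\rVert_m$ independent of $m$, then derive $\lVert\cA(n,m)x\rVert_n \le D\lambda^{n-m}\lVert x\rVert_m$ by a telescoping/submultiplicativity argument on the ``propagator norms'' $\sup_{m}\lVert\cA(m+1,m)|_{P_mX}\rVert$ combined with the summability.

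The unstable estimate~\eqref{eq:2} is handled symmetrically: for $x\in \ker P_m = Q_m X$ one uses the backward propagator $\cA(n,m)$ for $n<m$, which is well-defined and a genuine orbit solving $T_B\mathbf x = 0$ off a single index; the same Datko-type argument run ``backward in time'' with growth rate $\mu>1$ gives geometric decay of $\lVert\cA(n,m)Q_m x\rVert_n$ as $n\to -\infty$. I would also need to confirm the boundedness $\sup_m\lVert P_m\rVert < \infty$ (immediate from boundedness of $T_B^{-1}$ and shift-invariance, which forces the construction to be uniform in $m$), since this uniformity is what lets all the constants $D$, $\lambda$, $\mu$ be chosen independent of $m$.

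I expect the main obstacle to be exactly this last point: squeezing \emph{uniformity in $m$} out of the single operator $T_B^{-1}$. The operator $T_B$ sees all times at once, so a naive construction gives, for each $m$, a bound that a priori could degrade with $m$; the fix is to exploit admissibility property~2 --- the shift $\mathbf s \mapsto \mathbf s^m$ is bounded on $B$ uniformly in $m$ with constant $N$, hence the shift is bounded on $Y_B$ with the same constant $N$, and $T_B$ intertwines with the shift in the obvious way --- so that $\lVert T_B^{-1}\rVert$ itself already encodes a shift-uniform bound. Making that intertwining precise and then threading it through the Datko-type geometric-decay extraction, while simultaneously controlling the one-step propagator norms $\lVert A_m|_{P_mX}\rVert$ and $\lVert (A_m|_{\ker P_m})^{-1}\rVert$ uniformly, is the technical heart of the proof. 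Everything else --- well-definedness of the projections, the relation $A_mP_m = P_{m+1}A_m$, invertibility of $A_m$ on the kernels --- is a direct, if somewhat lengthy, verification from the defining property $x_n - A_{n-1}x_{n-1} = (T_B\mathbf x)_n$ together with the injectivity of $T_B$.
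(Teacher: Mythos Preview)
Your construction of the projections is essentially the paper's: feed in $\mathbf y=\chi_{\{n\}}v$, let $\mathbf x=T_B^{-1}\mathbf y$, and read off $P_nv=x_n$, $Q_nv=-A_{n-1}x_{n-1}$ from $x_n-A_{n-1}x_{n-1}=v$; the verifications of~\eqref{*AX} and of invertibility of $A_n\rvert\ker P_n$ proceed as you outline.

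The divergence is in the exponential bounds, and here your plan has a genuine gap. You propose a Datko--Pazy/Zabczyk extraction: from the uniform bound $\lVert(\lVert\cA(m+j,m)P_mx\rVert_{m+j})_j\rVert_B\le C\lVert x\rVert_m$ deduce geometric decay of the individual terms. But for a general admissible $B$ this inference is false---take $B=l^\infty$: membership in $B$ with bounded norm says only that the orbit is \emph{bounded}, and no amount of submultiplicativity of propagator norms turns uniform boundedness into exponential decay. The classical Datko step genuinely uses the $l^p$ structure (or at least that the constant sequence $\chi_{\{n\ge 0\}}$ is \emph{not} in $B$), which is not available here.

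The paper's route is quite different and is exactly the ``single step'' of the abstract. It introduces, for each base index $n$ and each $z\ge 1$, a perturbed operator
\[
(B(z)\boldsymbol\nu)_m=\begin{cases} z\nu_m-A_{m-1}\nu_{m-1},& m\le n,\\ z^{-1}\nu_m-A_{m-1}\nu_{m-1},& m>n,\end{cases}
\]
notes $\lVert B(z)-T_B\rVert\le z-1$, so $B(z)$ is invertible with controlled inverse for $z-1<1/\lVert T_B^{-1}\rVert$, and lets $\mathbf z=B(1/t)^{-1}\mathbf y$ for a fixed $t\in(0,1)$ in this range. The algebraic identity $x_m=t^{|m-n|-1}z_m$ (one checks $T_B(t^{|m-n|-1}z_m)_m=\mathbf y$, so uniqueness forces this) then gives immediately
\[
\lVert x_m\rVert_m=t^{|m-n|-1}\lVert z_m\rVert_m\le \frac{N}{\alpha_B}t^{|m-n|-1}\lVert\mathbf z\rVert_{Y_B}\le C\,t^{|m-n|}\lVert v\rVert_n,
\]
which is the exponential bound with rate $t$ in both directions at once. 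No iteration, no one-step propagator control, no Datko step---the geometric weight is inserted by hand through the perturbation $B(z)$, and its invertibility comes for free from the Neumann series. This is the idea you are missing; your test-sequence-free plan does not supply a substitute that works for arbitrary admissible $B$.
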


\begin{proof}
For each $n\in \Z$, let $X(n)$ be the set of all $x\in X$ with the property that there exists a sequence $\mathbf{x}=(x_m)_{m\in \Z}\in Y_B$ such that $x_n=x$ and $x_m=A_{m-1}x_{m-1}$ for $m>n$. Moreover, let $Z(n)$ be the set of all $x\in X$ for which there exists $\mathbf{z}=(z_m)_{m\in \Z}\in Y_B$ such that $z_n=x$ and $z_m=A_{m-1}z_{m-1}$ for $m\le n$.
One can easily verify that $X(n)$ and $Z(n)$ are subspaces of $X$.

\begin{lemma}\label{L}
For each $n\in \Z$, we have
\begin{equation}\label{DEC}
X=X(n)\oplus Z(n).
\end{equation}
\end{lemma}

\begin{proof}[Proof of the lemma]
Given $v\in X$, we define a sequence $\mathbf{y}=(y_m)_{m\in \Z}$ by $y_n=v$ and $y_m=0$ for $m\ne n$.
Clearly, $\mathbf{y}\in Y_B$. Hence, there exists $\mathbf{x}\in Y_B$ such that $T_B\mathbf{x}=\mathbf{y}$, that is,
\begin{equation}\label{a}
x_n-A_{n-1}x_{n-1}=v
\end{equation}
and
\begin{equation}\label{a1}
x_{m+1}=A_mx_m \quad \text{for} \quad m\neq n-1.
\end{equation}
Since $\mathbf{x}\in Y_B$, we obtain
\[
x_n\in X(n) \quad \text{and} \quad A_{n-1}x_{n-1}\in Z(n).
\]
Moreover, by~\eqref{a}, we have $v\in X(n)+ Z(n)$.

Now take $v\in X(n) \cap Z(n)$ and choose $\mathbf{x}=(x_m)_{m\in \Z}$ and $\mathbf{z}=(z_m)_{m\in \Z}$ in $Y_B$ such that $x_n=z_n=v$, \[
x_m=A_{m-1}x_{m-1} \quad \text{for} \quad m>n
\]
and
\[
z_m=A_{m-1}z_{m-1}\quad \text{for} \quad m\le n.
\]
We define $\mathbf{y}=(y_m)_{m\in \Z}$ by $y_m=x_m$ for $m\ge n$ and $y_m=z_m$ for $m<n$. It is easy to verify that $\mathbf{y}\in Y_B$ and $T_B \mathbf{y}=0$. Since $T_B$ is invertible, we have $\mathbf{y}=0$ and thus $y_n=v=0$.
\end{proof}

Let $P_n \colon X \to X(n)$ and $Q_n\colon X \to Z(n)$ be the projections associated to the decomposition in~\eqref{DEC}.

\begin{lemma}
Property~\eqref{*AX} holds.
\end{lemma}

\begin{proof}[Proof of the lemma]
It is sufficient to show that
\[
A_n X(n)\subset X(n+1) \quad \text{and} \quad A_n Z(n)\subset Z(n+1)
\]
for $n\in \Z$. Take $v\in X(n)$ and $\mathbf{x}=(x_m)_{m\in \Z}\in Y_B$ such that $x_n=v$ and
\[
x_m=A_{m-1}x_{m-1} \quad \text{for} \quad m> n.
\]
Then $x_{n+1}=A_n v\in X(n+1)$. Now take $v\in Z(n)$ and choose $\mathbf{z}=(z_m)_{m\in \Z}$ such that $z_n=v$ and $z_m=A_{m-1}z_{m-1}$ for $m\le n$. We define $\mathbf{z'}=(z_m')_{m\in \Z}$ by $z_m'=z_m$ for $m\neq n+1$ and $z_{n+1}=A_n v$. Since $\mathbf{z'}\in Y_B$ and
\[
z_m'=A_{m-1}z_{m-1}' \quad \text{for} \quad m\le n+1,
\]
we conclude that $A_n v\in Z(n+1)$.
\end{proof}

\begin{lemma}
The linear operator $A_n \rvert \ker P_n \colon \ker P_n \to \ker P_{n+1}$ is invertible for each $n\in \Z$.
\end{lemma}

\begin{proof}[Proof of the lemma]
We first establish the injectivity of the operator. Assume that $A_n v=0$ for $v\in \ker P_n=Z(n)$ and choose $\mathbf{z}=(z_m)_{m\in \Z}\in Y_B$ such that $z_n=v$ and
\[
z_m=A_{m-1}z_{m-1} \quad \text{for} \quad m\le n.
\]
Moreover, we define $\mathbf{y}=(y_m)_{m\in \Z}$ by $y_m=0$ for $m>n$ and $y_m=z_m$ for $m\le n$. Clearly, $\mathbf{y}\in Y_B$ and $T_B \mathbf{y}=0$. Since $T_B$ is invertible, we conclude that $\mathbf{y}=0$ and thus $y_n=v=0$.

In order to show that the operator is onto, take $v\in \ker P_{n+1}=Z(n+1)$ and $\mathbf{z}=(z_m)_{m\in \Z}\in Y_B$ with $z_{n+1}=v$ and $z_m=A_{m-1}z_{m-1}$ for $m\le n+1$. Clearly, $z_n \in Z(n)$ and $A_n z_n=z_{n+1}$. This shows that $A_n \rvert \ker P_n$ is onto.
\end{proof}

Now we establish exponential bounds. Take $n\in \Z$ and $v\in X$. Moreover, let $\mathbf{y}$ and $\mathbf{x}$ be as in the proof of Lemma~\ref{L}. For each $z \ge 1$, we define a linear operator
\[
B(z) \colon (\mathcal{D}(T_B),\lVert \cdot \rVert'_{Y_B}) \to (Y_B, \lVert \cdot \rVert_{Y_B})
\]
by
\[
(B(z)\boldsymbol{\nu})_m=
\begin{cases}
z\nu_m-A_{m-1}\nu_{m-1} & \text{if $m \le n$,} \\
\frac{1}{z} \nu_m-A_{m-1}\nu_{m-1} & \text{if $m > n$.}
\end{cases}
\]
We have $B(1)=T_B$ and
\[
\lVert ( B(z)-T_B)\boldsymbol{\nu} \rVert_{Y_B} \le (z-1)\lVert \boldsymbol{\nu}\rVert_{Y_B}'
\]
for $\boldsymbol{\nu} \in \mathcal{D}(T_B)$ and $z\ge 1$. In particular, this implies that $B(z)$ is invertible whenever $1\le z< 1+1/\lVert T_B^{-1}\rVert$, and
\[
\lVert B(z)^{-1}\rVert \le \frac{1}{\lVert T_B^{-1}\rVert^{-1}-(z-1)}.
\]
Take $t=1/z$ for a given $z \in (1,1+1/\lVert T_B^{-1} \rVert)$ and let $\mathbf{z} \in Y_B$ be the unique element such that
$B(1/t)\mathbf{z}=\mathbf{y}$. Writing
\[
D'=\frac{1}{\lVert T_B^{-1}\rVert^{-1}-(1/t-1)},
\]
we obtain
\[
\begin{split}
\lVert \mathbf{z}\rVert_{Y_B} & \le \lVert \mathbf{z}\rVert_{Y_B}' =\lVert B(1/t)^{-1} \mathbf{ y} \rVert_{Y_B}' \\
& \le D'\lVert \mathbf{y}\rVert_{Y_B} =ND'\alpha_B \lVert v\rVert_n
\end{split}
\]
(where $\alpha_B=\lVert \chi_{\{0\}}\rVert_B$). For each $m\in \Z$, let $x_m^*=t^{\lvert m-n\rvert -1}z_m$ and $\mathbf{x^*}=(x_m^*)_{m\in \N}$. Clearly, $\mathbf{x^*}\in Y_B$. One can easily verify that $T_B\mathbf{x^*}=\mathbf{y}$ and hence $\mathbf{x^*}=\mathbf{x}$. Thus,
\begin{equation}\label{*3}
\begin{split}
\lVert x_m\rVert_m & =\lVert x_m^*\rVert_m =t^{\lvert m-n\rvert -1}\lVert z_m\rVert_m \\
& \le \frac{N}{\alpha_B}t^{\lvert m-n\rvert -1} \lVert \mathbf{z}\rVert_{Y_B} \le \frac{N^2D'}{t}t^{\lvert m-n\rvert}\lVert v\rVert_n
\end{split}
\end{equation}
for $m\in \Z$. Moreover, it was shown in the proof of Lemma~\ref{L} that $P_n v=x_n$ and $Q_n v=-A_{n-1}x_{n-1}$. Hence, it follows from~\eqref{a1} and~\eqref{*3} that
\begin{equation}\label{AA}
\begin{split}
\lVert \cA(m,n)P_nv\rVert_m &=\lVert \cA(m,n)x_n\rVert_m =\lVert x_m\rVert_m \\
&\le \frac{N^2D'}{t}t^{m-n}\lVert v\rVert_n
\end{split}
\end{equation}
for $m\ge n$. Similarly, it follows from~\eqref{a1} and~\eqref{*3} that
\begin{equation}\label{EF}
\lVert \cA(m,n)Q_n v\rVert_m \le \frac{N^2D'}{t}t^{n-m}\lVert v\rVert_n
\end{equation}
for $m<n$. By \eqref{AA} and~\eqref{EF}, there exists $D>0$ such that~\eqref{eq:1} and~\eqref{eq:2} hold taking $\lambda=t$ and $\mu=1/t$. This completes the proof of the theorem.
\end{proof}

\section{Nonuniformly hyperbolic sets}

Now we consider an elaboration of the situation considered in Section~\ref{S3}. Namely, we characterize the notion of a nonuniformly hyperbolic set in terms of the invertibility of certain linear operators. More precisely, to each trajectory $f^n(x)$ of a nonuniformly hyperbolic set of a diffeomorphism~$f$ one can associate a linear operator defined in terms of the sequence of tangent spaces $d_{f^n(x)} f$ (see the discussion after Definition~\ref{DEF}). Moreover, each trajectory admits an exponential dichotomy with respect to the same sequence of tangent spaces and so it is natural to use arguments that are an elaboration of those in the former section.

\subsection{Basic notions}

Let $M$ be a compact Riemannian manifold and let $f\colon M \to M$ be a $C^{1}$ diffeomorphism.

\begin{definition}
An $f$-invariant measurable set $\Lambda \subset M$ is said to be \emph{nonuniformly hyperbolic} if there exist constants $0 < \lambda < 1 < \mu$ and a $df$-invariant splitting
\[
T_x M= E^s(x) \oplus E^u(x)
\]
for $x \in \Lambda$ such that given $\epsilon >0$, there exist measurable functions $C,K\colon \Lambda \to \R^+$ such that for each $x\in \Lambda$:
\begin{enumerate}
\item for $v\in E^s(x)$ and $n\ge 0$,
\begin{equation}\label{1}
\lVert d_x f^n v \rVert_{f^n(x)} \le C(x)\lambda^n e^{\epsilon n} \lVert v \rVert_x;
\end{equation}
\item for $v\in E^u(x)$ and $n\ge 0$,
\begin{equation}\label{2}
\lVert d_x f^{-n}v \rVert_{f^{-n}(x)} \le C(x)\mu^{-n}e^{\epsilon n} \lVert v\rVert_x;
\end{equation}
\item
\begin{equation}\label{3}
\angle (E^s(x),E^u(x))\ge K(x);
\end{equation}
\item for $n\in \Z$,
\begin{equation}\label{7}
C(f^n(x))\le C(x)e^{\epsilon \lvert n \rvert} \quad \text{and} \quad K(f^n(x))\ge K(x)e^{-\epsilon \lvert n \rvert}.
\end{equation}
\end{enumerate}
\end{definition}

We note that a nonuniform hyperbolic set gives rise naturally to a parameterized family of exponential dichotomies with respect to a sequence of norms. More precisely, to each trajectory one can associate an exponential dichotomy (see~\cite{BP2}).

\begin{proposition}\label{prp.6}
Let $\Lambda \subset M$ be a nonuniformly hyperbolic set. Then for each $\epsilon > 0$ such that $\lambda e^\epsilon < 1 < \mu e^{-\epsilon} $ there exists a norm $\lVert \cdot \rVert ' =\lVert \cdot \rVert^\epsilon$ on $T_\Lambda M$ such that for each $x \in \Lambda$ the sequence of linear operators
\[
A_n =d_{f^n(x)} f \colon T_{f^n(x)} M \to T_{f^{n+1}(x)} M
\]
admits an exponential dichotomy with respect to the norms $\lVert \cdot \rVert'_{f^n(x)}$.
\end{proposition}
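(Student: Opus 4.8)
The plan is to construct $\lVert\cdot\rVert'$ by the classical Lyapunov (Pesin) norm recipe, which is designed precisely so that in the new norm the dichotomy constant becomes independent of the point of $\Lambda$. Fix $\epsilon>0$ with $\lambda e^\epsilon<1<\mu e^{-\epsilon}$, apply the definition of a nonuniformly hyperbolic set with this $\epsilon$ to obtain measurable functions $C,K\colon\Lambda\to\R^+$, and choose $a,b\in(0,1)$ with $a>\lambda e^\epsilon$ and $b>e^\epsilon/\mu$; both intervals are nonempty by the choice of $\epsilon$. For $y\in\Lambda$ set
\[
\lVert v\rVert'_y=\sum_{k\ge0}a^{-k}\lVert d_yf^kv\rVert_{f^k(y)}\ \ (v\in E^s(y)),\qquad
\lVert v\rVert'_y=\sum_{k\ge0}b^{-k}\lVert d_yf^{-k}v\rVert_{f^{-k}(y)}\ \ (v\in E^u(y)),
\]
and $\lVert v\rVert'_y=\lVert v^s\rVert'_y+\lVert v^u\rVert'_y$ for a general $v=v^s+v^u\in E^s(y)\oplus E^u(y)$. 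A key point is that $\lVert\cdot\rVert'$ is defined on all of $T_\Lambda M$ at once and depends only on the base point, so it is automatically consistent along every $f$-orbit.

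First I would verify that $\lVert\cdot\rVert'_y$ is a well-defined norm on $T_yM$ equivalent to $\lVert\cdot\rVert_y$. By~\eqref{1}, $\lVert d_yf^kv\rVert_{f^k(y)}\le C(y)(\lambda e^\epsilon)^k\lVert v\rVert_y$ on $E^s(y)$, and by~\eqref{2}, $\lVert d_yf^{-k}v\rVert_{f^{-k}(y)}\le C(y)(e^\epsilon/\mu)^k\lVert v\rVert_y$ on $E^u(y)$, so both series are dominated by convergent geometric series; together with the trivial lower bound coming from the $k=0$ term this gives $\lVert v\rVert_y\le\lVert v\rVert'_y\le\frac{C(y)}{1-\lambda e^\epsilon/a}\lVert v\rVert_y$ on $E^s(y)$ and an analogous two-sided bound on $E^u(y)$. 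Since $E^s(y)\oplus E^u(y)=T_yM$ (with the angle bounded below by~\eqref{3}, though finite-dimensionality already suffices), $\lVert\cdot\rVert'_y$ is finite, positive-definite and subadditive, hence a norm equivalent to $\lVert\cdot\rVert_y$; and $x\mapsto\lVert\cdot\rVert'_x$ is measurable because $E^s$, $E^u$ and $C$ are.

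The heart of the argument is a \emph{one-step estimate} obtained by telescoping. For $v\in E^s(y)$, using $d_{f(y)}f^k(d_yfv)=d_yf^{k+1}v$ and $f^k(f(y))=f^{k+1}(y)$,
\[
\lVert d_yfv\rVert'_{f(y)}=\sum_{k\ge0}a^{-k}\lVert d_yf^{k+1}v\rVert_{f^{k+1}(y)}=a\sum_{j\ge1}a^{-j}\lVert d_yf^jv\rVert_{f^j(y)}=a\bigl(\lVert v\rVert'_y-\lVert v\rVert_y\bigr)\le a\lVert v\rVert'_y,
\]
and the analogous computation for $w\in E^u(f(y))$ gives $\lVert d_{f(y)}f^{-1}w\rVert'_y\le b\lVert w\rVert'_{f(y)}$. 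The crucial feature is that no point-dependent factor survives. Iterating, $\lVert d_yf^nv\rVert'_{f^n(y)}\le a^n\lVert v\rVert'_y$ on $E^s(y)$ and $\lVert d_yf^{-n}w\rVert'_{f^{-n}(y)}\le b^n\lVert w\rVert'_y$ on $E^u(y)$, for all $n\ge0$ and all $y\in\Lambda$.

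Finally I would fix $x\in\Lambda$, set $A_n=d_{f^n(x)}f$, and let $P_n$ be the projection of $T_{f^n(x)}M$ onto $E^s(f^n(x))$ along $E^u(f^n(x))$. The $df$-invariance of the splitting gives $A_nP_n=P_{n+1}A_n$, and since $f$ is a diffeomorphism each $A_n\rvert\ker P_n=d_{f^n(x)}f\rvert E^u(f^n(x))$ is an isomorphism onto $E^u(f^{n+1}(x))$, so part~(1) of Definition~\ref{DEF} holds. Unwinding~\eqref{*AC} one checks that $\cA(n,m)\rvert E^s$ is a product of differentials and that, for $n<m$, $\cA(n,m)\rvert\ker P_m=d_{f^m(x)}f^{-(m-n)}\rvert E^u(f^m(x))$; feeding the iterated one-step bounds into these identities yields~\eqref{eq:1} and~\eqref{eq:2} with $D=1$, $\lambda$ replaced by $a$ and $\mu$ replaced by $1/b$ (note $0<a<1<1/b$). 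I expect the only real subtlety beyond bookkeeping to be exactly this matching: arranging the telescoping so that no $y$-dependent constant remains (which is what makes $D$ uniform over $\Lambda$), and correctly identifying the negative-time branch of $\cA(n,m)$ on $\ker P_n$ with the inverse differentials along the orbit so that~\eqref{eq:2} indeed follows from the unstable estimate.
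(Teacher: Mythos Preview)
Your argument is correct and follows the same strategy the paper uses: build a Lyapunov (Pesin) adapted norm on $T_\Lambda M$ so that the one-step contraction/expansion constants become uniform, then read off the dichotomy with $D=1$ from the $df$-invariance of the splitting and the telescoping identity. Your use of the additive norm $\lVert v\rVert'=\lVert v^s\rVert'+\lVert v^u\rVert'$ makes the projection bound $\lVert P_n\rVert'\le 1$ immediate, which is all that is needed for \eqref{eq:1}--\eqref{eq:2}.

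The paper's construction (carried out inside the proof of Theorem~\ref{thm.5}) is a close variant: it takes suprema instead of series, uses the exact rates $\lambda e^{\epsilon}$ and $\mu e^{-\epsilon}$ rather than auxiliary rates $a,b$, and---this is the only substantive difference---adds a second summand to the stable (resp.\ unstable) norm weighted by the a priori bound $A=\sup_x\lVert d_xf\rVert$ along the \emph{backward} (resp.\ forward) orbit. That extra term is there so that $\lVert d_xf\rVert^{\epsilon}$ is itself uniformly bounded (inequality~\eqref{T4}), which the paper needs to show that the operator $R_x$ is bounded on $Y_x$ in Theorem~\ref{thm.5}. For Proposition~\ref{prp.6} alone your simpler norm is enough; the paper's more elaborate norm buys the additional two-sided control required later.
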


Alternatively, Proposition~\ref{prp.6} can be obtained as a consequence of the proof of Theorem~\ref{thm.5} below (the proof introduces a particular norm that is also adapted to our characterization of nonuniformly hyperbolic sets).

\subsection{Characterization of nonuniformly hyperbolic sets}

Given an admissible Banach sequence space $B$ and a norm $\lVert \cdot \rVert'$ on the tangent bundle $T_{\Lambda}M$, for each $x \in \Lambda$ we denote by $Y_x$ the set of all sequences $\boldsymbol{\mu}=(\mu_n)_{n\in \Z}$ with $\mu_n \in T_{x_n}M$, where $x_n=f^n(x)$, such that $(\lVert \mu_n \rVert_{x_n}')_{n\in \Z} \in B$. One can easily verify that $Y_{x}$ is a Banach space with the norm
\[
\lVert \boldsymbol{\mu}\rVert= \lVert (\lVert \mu_n \rVert_{x_n}')_{n\in \Z}\rVert_B.
\]
Finally, we define a linear operator $R_x$ by
\[
(R_x\boldsymbol{\mu})_n =\mu_n -d_{x_{n-1}} f \mu_{n-1}, \quad n\in \Z,
\]
on the domain formed by all $\boldsymbol{\mu}=(\mu_n)_{n\in \Z} \in Y_x$ such that $R_x\boldsymbol{\mu} \in Y_x$.

\begin{theorem}\label{thm.5}
Let $\Lambda \subset M$ be a nonuniformly hyperbolic set and let $B$ be an admissible Banach sequence space. Then there exists $\epsilon_0 >0$ such that for every $\epsilon \in (0, \epsilon_0)$ there is a norm $\lVert \cdot \rVert' = \lVert \cdot \rVert^ \epsilon $ on $T_{\Lambda} M$ and a measurable function $G\colon \Lambda \to \R^+$ such that for each $x \in \Lambda$:
\begin{enumerate}
\item \begin{equation}\label{G1}
\frac{1}{2}\lVert v\rVert_x \le \lVert v\rVert_x^{\epsilon} \le G(x)\rVert v\rVert_x, \quad v\in T_x M;
\end{equation}
\item \begin{equation}\label{G2}
G(f^n(x)) \le e^{2\epsilon \lvert n\rvert}G(x), \quad n\in \Z;
\end{equation}
\item
$R_x \colon Y_x \to Y_x$ is a well defined, bounded and invertible linear operator;
\item
there exists a constant $D>0$ (independent of $\epsilon$ and $x$) such that
\begin{equation}\label{BI}
\lVert R_x^{-1} \rVert \le D.
\end{equation}
\end{enumerate}
\end{theorem}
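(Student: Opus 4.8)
The plan is to construct an adapted (Lyapunov) norm along each orbit and then reduce everything to the results of Section~\ref{S3}.

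\textbf{Step 1 (the adapted norm).} I would first fix $\epsilon_0=\tfrac12\min\{-\log\lambda,\log\mu\}$, so that $\lambda e^\epsilon\le\sqrt\lambda<1$ and $\mu e^{-\epsilon}\ge\sqrt\mu>1$ for every $\epsilon\in(0,\epsilon_0)$. For $x\in\Lambda$ and $v=v^s+v^u\in T_xM$ with $v^s\in E^s(x)$, $v^u\in E^u(x)$, set
\[
\lVert v^s\rVert^\epsilon_x=\sup_{n\ge0}(\lambda e^\epsilon)^{-n}\lVert d_xf^nv^s\rVert_{f^n(x)},\qquad
\lVert v^u\rVert^\epsilon_x=\sup_{n\ge0}(\mu e^{-\epsilon})^{n}\lVert d_xf^{-n}v^u\rVert_{f^{-n}(x)},
\]
and $\lVert v\rVert^\epsilon_x=\max\{\lVert v^s\rVert^\epsilon_x,\lVert v^u\rVert^\epsilon_x\}$. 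By~\eqref{1} and~\eqref{2} the suprema are finite, in fact $\lVert v^s\rVert^\epsilon_x\le C(x)\lVert v^s\rVert_x$ and $\lVert v^u\rVert^\epsilon_x\le C(x)\lVert v^u\rVert_x$; combined with the angle estimate~\eqref{3} (which gives $\lVert v^s\rVert_x,\lVert v^u\rVert_x\le c_0K(x)^{-1}\lVert v\rVert_x$ for a universal constant $c_0$) this yields the upper bound in~\eqref{G1} with $G(x)=c_0C(x)/K(x)$. The lower bound in~\eqref{G1} is immediate, since the $n=0$ terms give $\lVert v^s\rVert^\epsilon_x\ge\lVert v^s\rVert_x$ and $\lVert v^u\rVert^\epsilon_x\ge\lVert v^u\rVert_x$, whence $\lVert v\rVert^\epsilon_x\ge\max\{\lVert v^s\rVert_x,\lVert v^u\rVert_x\}\ge\tfrac12\lVert v\rVert_x$. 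Property~\eqref{G2} follows at once from~\eqref{7}, since $G(f^n(x))=c_0C(f^n(x))/K(f^n(x))\le c_0C(x)e^{\epsilon\lvert n\rvert}/(K(x)e^{-\epsilon\lvert n\rvert})=G(x)e^{2\epsilon\lvert n\rvert}$. Measurability of $G$ and of $x\mapsto\lVert v\rVert^\epsilon_x$ follows from the measurability of $C$, $K$ and of the splitting.

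\textbf{Step 2 (the dichotomy along each orbit).} Next I would check, directly from the definition of the adapted norm, that for each $x\in\Lambda$ the sequence $A_n=d_{f^n(x)}f\colon T_{f^n(x)}M\to T_{f^{n+1}(x)}M$ admits an exponential dichotomy with respect to the norms $\lVert\cdot\rVert^\epsilon_{f^n(x)}$, with projections $P_n$ onto $E^s(f^n(x))$ along $E^u(f^n(x))$, rates $\lambda e^\epsilon$ and $\mu e^{-\epsilon}$, and \emph{dichotomy constant $D=1$}. Indeed, the shift-invariance of the suprema gives $\lVert\cA(m,n)P_nv\rVert^\epsilon_m\le(\lambda e^\epsilon)^{m-n}\lVert v\rVert^\epsilon_n$ for $m\ge n$ and $\lVert\cA(m,n)Q_nv\rVert^\epsilon_m\le(\mu e^{-\epsilon})^{m-n}\lVert v\rVert^\epsilon_n$ for $m\le n$, because after pushing forward by $\cA(m,n)$ the relevant supremum is a restriction of the one defining $\lVert v\rVert^\epsilon_n$. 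Conditions~\eqref{*AX} and the invertibility of $A_n\rvert\ker P_n$ hold because $f$ is a diffeomorphism and the splitting is $df$-invariant. This in particular proves Proposition~\ref{prp.6}.

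\textbf{Step 3 (invertibility and the uniform bound).} With the adapted norms in hand, $R_x$ is exactly the operator $T_B$ of Section~\ref{S3} for the sequence $(A_n)$ and the norms $\lVert\cdot\rVert^\epsilon_{f^n(x)}$, in the version for operators between the spaces $X_n=T_{f^n(x)}M$ described after Definition~\ref{DEF}. By the analogue of Proposition~\ref{propo.1}, $R_x$ is closed, hence on its domain equipped with the graph norm it is a bounded operator between Banach spaces; this gives the ``well defined and bounded'' part of item~3, and Theorem~\ref{thm.1} applied to the dichotomy of Step~2 gives its invertibility. For item~4 I would retrace the surjectivity argument in the proof of Theorem~\ref{thm.1}: the solution of $R_x\mathbf x=\mathbf y$ is $\mathbf x=\mathbf x^1+\mathbf x^2$ with $\lVert x^1_n\rVert^\epsilon_n\le\sum_{m\ge0}(\lambda e^\epsilon)^m\lVert y_{n-m}\rVert^\epsilon_{n-m}$ and $\lVert x^2_n\rVert^\epsilon_n\le\sum_{m\ge1}(\mu e^{-\epsilon})^{-m}\lVert y_{n+m}\rVert^\epsilon_{n+m}$ (using $D=1$). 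Proposition~\ref{l2}(3) then gives
\[
\lVert\mathbf x\rVert\le\Bigl(\frac{N}{1-\lambda e^\epsilon}+\frac{N(\mu e^{-\epsilon})^{-1}}{1-(\mu e^{-\epsilon})^{-1}}\Bigr)\lVert\mathbf y\rVert
\le\Bigl(\frac{N}{1-\sqrt\lambda}+\frac{N\mu^{-1/2}}{1-\mu^{-1/2}}\Bigr)\lVert\mathbf y\rVert,
\]
and adding $\lVert R_x\mathbf x\rVert=\lVert\mathbf y\rVert$ bounds the graph norm of $\mathbf x$, so that~\eqref{BI} holds with $D:=1+\frac{N}{1-\sqrt\lambda}+\frac{N\mu^{-1/2}}{1-\mu^{-1/2}}$, which is independent of $\epsilon$ and $x$.

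The parts I expect to require the most care are the choice of $\epsilon_0$ (it must keep the rates $\lambda e^\epsilon$ and $\mu e^{-\epsilon}$ bounded away from $1$ uniformly in $\epsilon\in(0,\epsilon_0)$, which is precisely what makes $D$ in~\eqref{BI} independent of $\epsilon$) and the bookkeeping in the upper bound of~\eqref{G1}, where the angle condition~\eqref{3} is needed to pass from the norms of $v^s$ and $v^u$ to the norm of $v$; everything else is a direct transcription of the arguments of Section~\ref{S3}.
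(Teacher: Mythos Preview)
Your argument is correct and follows the same overall strategy as the paper---construct a Lyapunov norm adapted to the hyperbolic splitting, take the max over the stable and unstable parts, and then run the surjectivity/injectivity argument of Theorem~\ref{thm.1} along each orbit---but the details differ in a couple of instructive ways. First, the paper's adapted norm on each of $E^s(x)$ and $E^u(x)$ is a \emph{two-sided} supremum (with a second term involving the uniform bound $A=\sup_y\lVert d_yf\rVert$, available by compactness of $M$), chosen precisely so that $\lVert d_xf v\rVert^\epsilon_{f(x)}\le A(e^\epsilon+1)\lVert v\rVert^\epsilon_x$ holds for \emph{all} $v\in T_xM$; this is what makes $R_x\colon Y_x\to Y_x$ a bounded operator on the whole space, which is the actual content of item~3. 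With your simpler one-sided norm the same boundedness is still true (one checks $\lVert d_xf v^u\rVert^\epsilon_{f(x)}\le\max\{\mu e^{-\epsilon},A\}\lVert v^u\rVert^\epsilon_x$ directly), but you should say this rather than fall back on the graph-norm viewpoint, which only yields boundedness on $\mathcal D(R_x)$ and does not by itself give $\mathcal D(R_x)=Y_x$. Second, the paper isolates an auxiliary lemma bounding the projections $P(x),Q(x)$ in the $\lVert\cdot\rVert^\epsilon$-norm by a constant $Z$ independent of $x$ and $\epsilon$, proved from the dynamical estimates alone; since both you and the paper take $\lVert v\rVert^\epsilon_x=\max\{\lVert v^s\rVert^\epsilon_x,\lVert v^u\rVert^\epsilon_x\}$, this holds trivially with $Z=1$, which is exactly what you exploit when you write the dichotomy constant as $D=1$. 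Net effect: your route is a bit more economical (and your explicit choice of $\epsilon_0$ makes the $\epsilon$-independence of the final bound transparent), while the paper's extra ingredients make the everywhere-definedness of $R_x$ on $Y_x$ immediate.
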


\begin{proof}
Since $M$ is compact and $f$ is continuous, there exists $A>0$ such that $\lVert d_x f\rVert \le A$ and $\lVert d_x f^{-1}\rVert \le A$ for $x\in M$. Without loss of generality, one may assume that $1/ A \le \lambda$ and $\mu \le A$ (since otherwise one can simply increase~$A$). Take $\epsilon_0>0$ such that $\lambda e^{\epsilon_0} < 1 < \mu e^{-\epsilon_0}$. For each $\epsilon \in (0,\epsilon_0)$ we introduce an adapted norm $\lVert \cdot \rVert^{\epsilon}$ on $T_{\Lambda}M$. For $v\in E^s(x)$, let
\[
\lVert v\rVert_x^{\epsilon}= \sup_{n\ge 0}\big(\lambda^{-n}e^{-\epsilon n}\lVert d_x f^n v\rVert_{f^n(x)} \big)+ \sup_{n <0}\big( e^{\epsilon n}A^n\lVert d_x f^n \rVert_{f^n(x)}\big).
\]
It follows from~\eqref{1} that
\begin{equation}\label{B1}
\lVert v\rVert_x \le \lVert v\rVert_x^{\epsilon} \le (C(x)+1)\lVert v\rVert_x \quad \text{for} \quad v\in E^s(x).
\end{equation}
Moreover,
\begin{equation}\label{T1}
\begin{split}
\lVert d_x f v\rVert_{f(x)}^{\epsilon} &=\sup_{n\ge 0}\big(\lambda^{-n}e^{-\epsilon n}\lVert d_x f^{n+1} v\rVert_{f^{n+1}(x)} \big) \\
&\phantom{=}+ \sup_{n <0} \big(e^{\epsilon n} A^n\lVert d_x f^{n+1}v\rVert_{f^{n+1}(x)} \big) \\
&= \lambda e^{\epsilon }\sup_{n\ge 0} \big(\lambda^{-(n+1)}e^{-\epsilon (n+1)}\lVert d_x f^{n+1} v\rVert_{f^{n+1}(x)} \big) \\
&\phantom{=}+
\frac{1}{A}e^{-\epsilon }\sup_{n <0}\big(A^{n+1}e^{\epsilon (n+1)}\lVert d_x f^{n+1} v\rVert_{f^{n+1}(x)} \big) \\
& \le \lambda e^{\epsilon }\lVert v\rVert_x^{\epsilon}
\end{split}
\end{equation}
for $v\in E^s(x)$. Similarly, for $v\in E^u(x)$, let
\[
\lVert v\rVert_x^{\epsilon}=\sup_{n \le 0}\big( \mu^{-n}e^{\epsilon n}\lVert d_xf^nv\rVert_{f^{n}(x)}\big)+\sup_{n > 0}\big(A^{-n}e^{-\epsilon n}\lVert d_xf^n v\rVert_{f^{n}(x)} \big).
\]
It follows from~\eqref{2} that
\begin{equation}\label{B2}
\lVert v\rVert_x \le \lVert v\rVert_x^{\epsilon} \le (C(x)+1)\lVert v\rVert_x \quad \text{for} \quad v\in E^u(x).
\end{equation}
Moreover,
\begin{equation}\label{T3}
\begin{split}
\lVert d_xf^{-1}v\rVert_{f^{-1}(x)}^{\epsilon} &=\sup_{n \le 0}\big(\mu^{-n}e^{\epsilon n}\lVert d_x f^{n-1} v\rVert_{f^{n-1}(x)} \big)\\
&\phantom{=}+\sup_{n > 0}\big( e^{-\epsilon n} A^{-n}\lVert d_xf^{n-1}v\rVert_{f^{n-1}(x)}\big) \\
&=\frac{1}{\mu}e^{\epsilon }\sup_{n \le 0}\big(\mu^{-(n-1)}e^{\epsilon (n-1)}\lVert d_xf^{n-1} v\rVert_{f^{n-1}(x)} \big) \\
&\phantom{=}+\frac{1}{A}e^{-\epsilon}\sup_{n > 0}\big(e^{-\epsilon (n-1)}A^{-(n-1)}\lVert d_xf^{n-1} v\rVert_{f^{n-1}(x)} \big) \\
& \le \frac{1}{\mu}e^{\epsilon }\lVert v\rVert_x^{\epsilon}
\end{split}
\end{equation}
for $v\in E^u(x)$. One can show in a similar manner that
\begin{equation}\label{T2}
\lVert d_xf v\rVert_{f(x)}^{\epsilon} \le A(e^{\epsilon}+1) \lVert v\rVert_x^{\epsilon} \quad \text{for} \quad v\in E^u(x).
\end{equation}
For an arbitrary $v\in T_x M$, we define
\[
\lVert v\rVert_x^{\epsilon} =\max \big\{ \lVert v^s\rVert_x^{\epsilon}, \lVert v^u \rVert_x^{\epsilon} \big\},
\]
where $v=v^s+v^u$ with $v^s \in E^s(x)$ and $v^u \in E^u(x)$. It follows from~\eqref{3}, \eqref{B1} and~\eqref{B2} that
\[
\frac{1}{2}\lVert v\rVert_x \le \lVert v\rVert_x^{\epsilon} \le \frac{C(x)+1}{K(x)}\lVert v\rVert_x \quad \text{for} \quad v\in T_x M.
\]
Hence, \eqref{G1} holds taking $G(x)=(C(x)+1)/K(x)$. Moreover, it follows from~\eqref{7} that~\eqref{G2} holds. Finally, it follows from~\eqref{T1} and~\eqref{T2} that
\begin{equation}\label{T4}
\lVert d_xf v\rVert_{f(x)}^{\epsilon} \le A(e^{\epsilon}+1) \lVert v\rVert_x^{\epsilon}
\end{equation}
for $x\in \Lambda$ and $v\in T_x M$.

Now let $P(x) \colon T_x M \to E^s(x)$ and $Q(x) \colon T_x M \to E^u(x)$ be the projections associated to the decomposition $T_x M=E^s(x) \oplus E^u(x)$.
\begin{lemma}\label{AUX}
There exists a constant $Z>0$ (independent of $\epsilon $ and $x$) such that
\begin{equation}\label{BB}
\lVert P(x) v\rVert_x^{\epsilon} \le Z\lVert v\rVert_x^{\epsilon} \quad \text{and} \quad \lVert Q(x) v\rVert_x^{\epsilon} \le Z\lVert v\rVert_x^{\epsilon}
\end{equation}
for $x\in \Lambda$ and $v\in T_x M$.
\end{lemma}

\begin{proof}[Proof of the lemma]
For each $x \in \Lambda$ let
\[
\gamma_x^{\epsilon}=\inf \big\{\lVert v^s+v^u\rVert_x^{\epsilon}: \lVert v^s\rVert_x^{\epsilon}=\lVert v^u\rVert_x^{\epsilon}=1, v^s\in E^s(x), v^u \in E^u(x) \big\}.
\]
Take a vector $v\in T_x M$ such that $P v\neq 0$ and $Q v\neq 0$, where $P=P(x)$ and $Q=Q(x)$. Then
\[
\begin{split}
\gamma_x^{\epsilon} \le \bigg{\lVert} \frac{P v}{\lVert P v\rVert_x^{\epsilon}}+ \frac{Q v}{\lVert Q v\rVert_x^{\epsilon}} \bigg{\rVert}_x^{\epsilon} &=\frac{1}{\lVert P v\rVert_x^{\epsilon}}\bigg{\lVert} P v+\frac{\lVert P v\rVert_x^{\epsilon}}{\lVert Qv\rVert_x^{\epsilon}}Q v\bigg{\rVert}_x^{\epsilon} \\
&= \frac{1}{\lVert P v\rVert_x^{\epsilon}}\bigg{\lVert} v+\frac{\lVert P v\rVert_x^{\epsilon}-\lVert Qv\rVert_x^{\epsilon}}{\lVert Qv\rVert_x^{\epsilon}}Q v\bigg{\rVert}_x^{\epsilon} \\
& \le \frac{2\lVert v\rVert_x^{\epsilon}}{\lVert Pv\rVert_x^{\epsilon}}
\end{split}
\]
and thus,
\[
\lVert Pv\rVert_x^{\epsilon} \le \frac{2}{\gamma_x^{\epsilon}}\lVert v\rVert_x^{\epsilon}
\]
for $v\in T_x M$. In order to estimate $\gamma_x^{\epsilon}$, take $v^s\in E^s(x), v^u \in E^u(x) $ such that $ \lVert v^s\rVert_x^{\epsilon}=\lVert v^u\rVert_x^{\epsilon}=1$. It follows from~\eqref{T1}, \eqref{T3} and~\eqref{T4} (recall that $\epsilon <\epsilon_0$) that
\[
\begin{split}
\lVert v^s+v^u \rVert_x^{\epsilon}& \ge \frac{1}{A(e^{\epsilon_0}+1)}\lVert d_x f(v^s+v^u)\rVert_{f(x)}^{\epsilon} \\
& \ge \frac{1}{A(e^{\epsilon_0}+1)} \big(\lVert d_xfv^u \rVert_{f(x)}^{\epsilon}-\lVert d_xfv^s \rVert_{f(x)}^{\epsilon}\big) \\
& \ge \frac{1}{A(e^{\epsilon_0}+1)}(\mu e^{-\epsilon_0}-\lambda e^{\epsilon_0})
\end{split}
\]
and thus,
\[
\gamma_x^{\epsilon} \ge \frac{1}{A(e^{\epsilon_0}+1)}(\mu e^{-\epsilon_0}-\lambda e^{\epsilon_0}).
\]
Therefore, \eqref{BB} holds taking
\[
Z=\frac{2A(e^{\epsilon_0}+1)}{\mu e^{-\epsilon_0}-\lambda e^{\epsilon_0}}.
\]
This completes the proof of the lemma.
\end{proof}

Now take $x\in \Lambda$. It follows from~\eqref{T4} that $R_x$ is a well defined bounded linear operator on $Y_x$. We first show that it is onto. Let $\boldsymbol{\mu}=(\mu_n)_{n\in \Z} \in Y_x$. By Lemma~\ref{AUX}, we have $\boldsymbol{\mu^s}=(\mu_n^s)_{n\in \Z} \in Y_x$ and $\boldsymbol{\mu^u}=(\mu_n^u)_{n\in \Z} \in Y_x$, where
\[
\mu_n^s = P(f^n(x)) \mu_n\quad\text{and}\quad \mu_n^u =Q(f^n(x)) \mu_n.
\]
For each $n\in \Z$, let
\[
\xi _{n}^{s} =\sum_{m\geq 0}d_{x_{n-m}} f^{m} \mu _{n-m}^{s}
\]
and
\[
\xi _{n}^{u} =-\sum_{m \geq 1}d_{x_{n+m}} f^{-m} \mu _{n+m}^{u}.
\]
It follows from~\eqref{bnd}, \eqref{T1}, \eqref{T3} and~\eqref{BB} (since $\epsilon <\epsilon_0$) that $\boldsymbol{\xi^s}=(\xi_n^s)_{n\in \Z}$ and $\boldsymbol{\xi^u}=(\xi_n^u)_{n\in \Z}$ belong to $Y_x$. Moreover,
\[
\lVert \boldsymbol{\xi^s} \rVert \le \frac{1}{1-\lambda e^{\epsilon_0}}Z \lVert \boldsymbol{\mu}\rVert \quad \text{and} \quad \lVert \boldsymbol{\xi^u} \rVert \le \frac{1}{\mu e^{-\epsilon_0}-1} Z\lVert \boldsymbol{\mu}\rVert
\]
for $n\in \Z$. Therefore, $\boldsymbol{\xi}=(\xi_n)_{n\in \Z}$, where $\xi_n=\xi_n^s+\xi_n^u$, belongs to $Y_x$ and
\begin{equation}\label{N}
\lVert \boldsymbol{\xi}\rVert \le Z\bigg{(}\frac{1}{1-\lambda e^{\epsilon_0}}+\frac{1}{\mu e^{-\epsilon_0}-1} \bigg{)}\lVert\boldsymbol{\mu}\rVert.
\end{equation}
Moreover, one can easily verify that $R_x\boldsymbol{\xi}=\boldsymbol{\mu}$.

Now we show that $R_x$ is injective. Assume that $R_x \boldsymbol{\xi}=0$ for some $\boldsymbol{\xi}=(\xi_n)_{n\in \Z} \in Y_x$. Then
$\xi_n=d_{x_{n-1}} f$ for $n\in \Z$ and hence, $\xi_n^s=d_{x_{n-1}} f \xi_{n-1}^s$ and $\xi_n^u=d_{x_{n-1}} f\xi_{n-1}^u$ for $n\in \Z$. For each $k\in \Z$, it follows from~\eqref{T1} that
\[
\lVert \xi_k^s \rVert_{x_k}^{\epsilon} \le (\lambda e^{\epsilon })^m \lVert \xi_{k-m}^s\rVert_{x_{k-m}}^\epsilon \le \frac{NZ}{\alpha_B} (\lambda e^{\epsilon_0})^m \lVert \boldsymbol{\xi}\rVert
\]
for $m \ge 0$. Letting $m\to +\infty$, since $\lambda e^{\epsilon_0} <1$ we obtain $\xi_k^s =0$. Similarly, $\xi_k^u=0$ for $k\in \Z$ and thus $\boldsymbol{\xi}=0$.
This shows that $R_x$ is invertible. In addition, it follows from~\eqref{N} that there exists a constant $D>0$ (independent on $x$ and $\epsilon$) such that~\eqref{BI} holds. This completes the proof of the theorem.
\end{proof}

Now we establish the converse of Theorem~\ref{thm.5}.

\begin{theorem}
Let $\Lambda \subset M$ be an $f$-invariant measurable set and let $B$ be an admissible Banach sequence space. Assume that there exist $D>0$ and $\epsilon_0 >0$ such that for each $\epsilon \in (0,\epsilon_0)$ there is a norm $\lVert \cdot \rVert^{\epsilon}$ on $T_{\Lambda}M$ and a measurable function $G \colon \Lambda \to \R^+$ such that for each $x \in \Lambda$:
\begin{enumerate}
\item \eqref{G1} and~\eqref{G2} hold;
\item $R_x \colon Y_x \to Y_x$ is a well defined bounded invertible linear operator and~\eqref{BI} holds.
\end{enumerate}
Then $\Lambda$ is a nonuniformly hyperbolic set.
\end{theorem}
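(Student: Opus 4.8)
The plan is to run the construction in the proof of Theorem~\ref{thm.5} backwards: for each fixed basepoint the invertibility of $R_x$ produces an exponential dichotomy along the corresponding orbit, the bound~\eqref{BI} makes all the constants uniform, and the remaining work is to show that the stable and unstable spaces obtained this way do not depend on $\epsilon$, so that they assemble into a single $df$-invariant splitting. \emph{Step 1 (pointwise dichotomy with uniform constants).} I would fix $x\in\Lambda$ and $\epsilon\in(0,\epsilon_0)$, put $x_n=f^n(x)$ and $A_m=d_{x_m}f$, and observe that by~\eqref{G1} each norm $\lVert\cdot\rVert^\epsilon_{x_n}$ is equivalent to the Riemannian norm on $T_{x_n}M$, so that $Y_x$ and $R_x$ are exactly the space $Y_B$ and the operator $T_B$ of Section~\ref{S3} in the ``different spaces'' setting of the remark after Definition~\ref{DEF} (note that every $A_m$ is invertible). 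Since $R_x$ is bijective, the converse of Theorem~\ref{thm.1} then gives an exponential dichotomy for $(A_m)_{m\in\Z}$ with respect to $\lVert\cdot\rVert^\epsilon_{x_n}$, with projections $P^x_n\colon T_{x_n}M\to T_{x_n}M$ satisfying~\eqref{*AX}. From the proof of that theorem one may take $\lambda=t$ and $\mu=1/t$ for any $t$ with $\lVert R_x^{-1}\rVert/(\lVert R_x^{-1}\rVert+1)<t<1$, the dichotomy constant being controlled by $\lVert R_x^{-1}\rVert$ and $t$; using~\eqref{BI} I would fix once and for all $t_*=2D/(2D+1)\in(0,1)$, independent of $x$ and $\epsilon$, and obtain a constant $D_*>0$ depending only on $D$, $N$ and $\lVert\chi_{\{0\}}\rVert_B$ such that, with $Q^x_n=\Id-P^x_n$,
\[
\lVert\cA(m,n)P^x_nv\rVert^\epsilon_{x_m}\le D_*\,t_*^{\,m-n}\lVert v\rVert^\epsilon_{x_n}\ \ (m\ge n),\qquad
\lVert\cA(m,n)Q^x_nv\rVert^\epsilon_{x_m}\le D_*\,t_*^{\,n-m}\lVert v\rVert^\epsilon_{x_n}\ \ (m\le n)
\]
for all $v$, where $\cA(m,n)=d_{x_n}f^{\,m-n}$, read as the (unstable) inverse on $\ker P^x_n$ when $m<n$; in particular $\lVert P^x_nv\rVert^\epsilon_{x_n}\le D_*\lVert v\rVert^\epsilon_{x_n}$.

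\emph{Step 2 (the splitting is intrinsic).} I would then set
\[
E^s(x)=\Bigl\{v\in T_xM:\sup_{m\ge0}\lVert d_xf^mv\rVert_{x_m}<\infty\Bigr\},\qquad
E^u(x)=\Bigl\{v\in T_xM:\sup_{m\ge0}\lVert d_xf^{-m}v\rVert_{x_{-m}}<\infty\Bigr\},
\]
which are $df$-invariant by construction and are measurable families of subspaces because $(x,v)\mapsto\sup_{m\ge0}\lVert d_xf^{m}v\rVert_{x_m}$ and $(x,v)\mapsto\sup_{m\ge0}\lVert d_xf^{-m}v\rVert_{x_{-m}}$ are measurable. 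The central claim is that there is $\epsilon_*\in(0,\epsilon_0]$ with $E^s(x)=\Ima P^x_0$ and $E^u(x)=\ker P^x_0$ for every $\epsilon\in(0,\epsilon_*)$. The inclusions ``$\supseteq$'' are immediate from the decay estimates of Step~1 and $\frac{1}{2}\lVert\cdot\rVert\le\lVert\cdot\rVert^\epsilon$. For ``$\subseteq$'', given $v\in E^s(x)$ I would write $v=v^s+v^u$ with $v^s=P^x_0v$, $v^u=Q^x_0v$, apply the second estimate of Step~1 with $m=0$ to $w=\cA(n,0)v^u\in\ker P^x_n$ to get $\lVert d_xf^nv^u\rVert^\epsilon_{x_n}\ge D_*^{-1}t_*^{-n}\lVert v^u\rVert^\epsilon_x$, and then by~\eqref{G1} and~\eqref{G2}
\[
\lVert d_xf^nv^u\rVert_{x_n}\ge\frac{1}{G(x_n)}\lVert d_xf^nv^u\rVert^\epsilon_{x_n}\ge\frac{\lVert v^u\rVert^\epsilon_x}{D_*\,G(x)}\,(t_*e^{2\epsilon})^{-n}.
\]
Choosing $\epsilon_*=\min\{\epsilon_0,\frac{1}{2}\log(1/t_*)\}$ makes the right-hand side tend to $+\infty$, so the forward orbit of $v$ can be bounded only if $v^u=0$, i.e.\ $v\in\Ima P^x_0$; the case of $E^u$ is symmetric. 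Hence $T_xM=E^s(x)\oplus E^u(x)$, and $df$-invariance of the two summands also follows from~\eqref{*AX} and the invertibility of $d_xf$.

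\emph{Step 3 (the functions $C$ and $K$).} For a given $\epsilon>0$, since the conditions defining a nonuniformly hyperbolic set only weaken as $\epsilon$ increases I may assume $\epsilon\in(0,2\epsilon_*)$ and work with $\lVert\cdot\rVert^{\epsilon/2}$ and its function $G$. I would put $\lambda=t_*$, $\mu=1/t_*$, $C(x)=2D_*G(x)$ and $K(x)=c_1/G(x)$, where $c_1>0$ depends only on $D_*$ and a universal geometric constant and is chosen so that the elementary bound $\angle(V_1,V_2)\ge\mathrm{const}/\lVert\Pi\rVert$ for a projection $\Pi$ onto $V_1$ along $V_2$, applied with $\Pi=P^x_0$ and $\lVert P^x_0v\rVert_x\le2D_*G(x)\lVert v\rVert_x$ (from $\lVert P^x_0v\rVert^{\epsilon/2}_x\le D_*\lVert v\rVert^{\epsilon/2}_x$ and~\eqref{G1}), yields $\angle(E^s(x),E^u(x))\ge K(x)$; both $C$ and $K$ are measurable since $G$ is. Then, for $v\in E^s(x)=\Ima P^x_0$ and $n\ge0$,
\[
\lVert d_xf^nv\rVert_{x_n}\le2\lVert d_xf^nv\rVert^{\epsilon/2}_{x_n}\le2D_*t_*^{\,n}\lVert v\rVert^{\epsilon/2}_x\le2D_*G(x)\lambda^n\lVert v\rVert_x\le C(x)\lambda^ne^{\epsilon n}\lVert v\rVert_x,
\]
which is~\eqref{1}; \eqref{2} comes out the same way from the estimate for $Q^x_0$; \eqref{3} holds by the choice of $K$; and~\eqref{7} holds because~\eqref{G2} for $\lVert\cdot\rVert^{\epsilon/2}$ gives $G(f^n(x))\le e^{\epsilon|n|}G(x)$, whence $C(f^n(x))\le e^{\epsilon|n|}C(x)$ and $K(f^n(x))\ge e^{-\epsilon|n|}K(x)$. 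This exhibits $\Lambda$ as a nonuniformly hyperbolic set with constants $\lambda=t_*$ and $\mu=1/t_*$.

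\emph{Where the difficulty lies.} Steps~1 and~3 are largely bookkeeping: Step~1 is a verbatim application of Section~\ref{S3}, the only point being to extract from its proof that~\eqref{BI} makes the dichotomy constants uniform in $x$ and $\epsilon$ — and uniformity in $x$ is exactly what lets $C$ be of the form $2D_*G$, so that~\eqref{7} reduces to~\eqref{G2}. The main obstacle is Step~2: $R_x$ only ``sees'' the $\epsilon$-adapted norm, so a priori $\Ima P^x_0$ and $\ker P^x_0$ could vary with $\epsilon$, whereas the definition of a nonuniformly hyperbolic set demands one fixed splitting. Pinning these spaces to the $\epsilon$-free sets $E^s(x)$, $E^u(x)$, and controlling how the comparison factor $G$ in~\eqref{G1}--\eqref{G2} competes against the exponential rate $t_*$ — which is precisely what determines the threshold $\epsilon_*$ and forces the rescaling $\epsilon\mapsto\epsilon/2$ in Step~3 — is where care is needed.
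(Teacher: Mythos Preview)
Your proposal is correct and is in spirit the same argument as the paper's: both obtain, for each orbit, an exponential dichotomy in the $\epsilon$-norm with constants controlled solely by~\eqref{BI} (via the $B(z)$ perturbation of Section~\ref{S3}), and then transfer these to the Riemannian norm through~\eqref{G1}--\eqref{G2}, paying a factor $G(x)$ that becomes the function~$C$ (and its reciprocal the function~$K$), with the temperedness~\eqref{7} coming directly from~\eqref{G2} at the cost of doubling~$\epsilon$. The differences are organizational rather than substantive. First, you invoke the converse of Theorem~\ref{thm.1} as a black box and extract uniformity of $t_*$, $D_*$ from its proof, whereas the paper reruns the perturbation argument inline at the fixed basepoint~$x$; this is the same computation. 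Second --- and this is the one point where you are actually more careful --- you pin down $E^s(x)$ and $E^u(x)$ intrinsically as the bounded-forward/bounded-backward orbit subspaces in the Riemannian norm, and then prove these coincide with $\Ima P^x_0$, $\ker P^x_0$ once $\epsilon$ is small enough that $t_*e^{2\epsilon}<1$; the paper instead defines $E^s(x)$, $E^u(x)$ constructively via the unique $R_x$-preimage of the test sequence and asserts in one line that ``these are uniquely defined (and so independent of~$\epsilon$)''. Your bounded-orbit characterization makes that independence transparent and simultaneously yields $df$-invariance and measurability, which is a small gain in clarity for essentially no extra work.
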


\begin{proof}
Take $x\in \Lambda$ and $v\in T_xM$. We define $\boldsymbol{\mu}=(\mu_n)_{n\in \Z}$ by $\mu_0=v$ and $\mu_n =0$ for $n \neq 0$. Clearly, $\boldsymbol{\mu} \in Y_x$. Now take $\boldsymbol{\xi}=(\xi_n)_{n\in \Z} \in Y_x$ such that $R_x \boldsymbol{\xi}=\boldsymbol{\mu}$. It can be written in the form
\[
\xi_n=\begin{cases}
d_{x_{n-1}} f\xi_{n-1}, & n \ne 0, \\
d_{x_{-1}} f_{-1} + v, & n=0.
\end{cases}
\]
We will show that $v=v^s+v^u$, where $v^s=\xi_0$ and $v^u=-d_{x_{-1}} f \xi_{-1}$ is the hyperbolic splitting. For each $z \ge 1$ we define an operator $B(z)$ on $Y_x$ by
\[
(B(z)\boldsymbol{\nu})_m=
\begin{cases}
z\nu_m-d_{x_{m-1}} f \nu_{m-1} & \text{if $m \le 0$,} \\
\frac{1}{z} \nu_m-d_{x_{m-1}}f\nu_{m-1}\nu_{m-1} & \text{if $m > 0$.}
\end{cases}
\]
 Clearly,
\[
\lVert (B(z)-R_x)\boldsymbol{\nu}\rVert \le (z-1)\lVert \boldsymbol{\nu}\rVert
\]
for $\boldsymbol{\nu} \in Y_x$ and $z\ge 1$. Therefore, $B(z)$ is invertible whenever $1 \le z < 1+ 1/D$, and
\[
\lVert B(z)^{-1}\rVert \le \frac{1}{D^{-1}-(z-1)}.
\]
Now take $\lambda \in (0,1)$ (independently on $\epsilon$) such that $\lambda^{-1} <1+1/D$ and take $\boldsymbol{\xi^*}\in Y_x$ such that $B(\lambda^{-1})\boldsymbol{\xi^*}= \boldsymbol{\mu}$. Writing
\[
D'=\frac{1}{D^{-1}-(\lambda^{-1}-1)},
\]
we obtain
\[
\lVert \boldsymbol{\xi^*}\rVert=\lVert B(\lambda^{-1})^{-1}\boldsymbol{\mu}\rVert \le D'\lVert \boldsymbol{\mu}\rVert =D'\alpha_B \lVert v\rVert_x^{\epsilon}.
\]
For each $m\in \Z$, let $\overline \xi_m=\lambda^{\lvert m\rvert-1}\xi_m^*$ and $\overline{\boldsymbol{\xi}}=(\overline \xi_m)_{m\in \Z}$. Clearly, $\overline{\boldsymbol{\xi}}\in Y_x$. Moreover, one can easily verify that
$R_{\mathbf{x}}\overline{\boldsymbol{\xi}}=\boldsymbol{\mu}$ and hence $\overline{\boldsymbol{\xi}}=\boldsymbol{\xi}$. Thus,
\[
\lVert \xi_m \rVert_{x_m}^{\epsilon}=\lVert \overline \xi_m \rVert_{x_m}^{\epsilon}=\lambda^{\lvert m\rvert -1}\lVert \xi_m^*\rVert_{x_m}^{\epsilon} \le D'N\lambda^{\lvert m\rvert -1} \lVert v\rVert_x^{\epsilon}
\]
for $m\in \Z$. Finally, it follows from~\eqref{G1} that
\[
\lVert d_xf^m v^s\rVert \le C(x)\lambda^m \lVert v\rVert \quad \text{and} \quad \lVert d_x f^{-m} v^u\rVert \le C(x)\lambda^{m}\lVert v\rVert
\]
for $m \ge 1$, where $C(x)=2D'NG(x)\lambda^{-1}$.

Now let $E^s(x)$ and $E^u(x)$ be the sets, respectively, of all vectors $v^s$ and~$v^u$ constructed above. These are $df$-invariant subspaces of $T_x M$ and are uniquely defined (and so independent of~$\epsilon$). Indeed, take $v\in T_x M$ and let $v=v^s +v^u$ with $v^s\in E^s(x)$ and $v^u\in E^u(x)$. We define $\boldsymbol{\mu}=(\mu_n)_{n\in \Z}$ by $\mu_0=d_x f v$ and $\mu_n=0$ for $n\neq 0$. Clearly, $\boldsymbol{\mu} \in Y_{f(x)}$. Moreover, we define $\boldsymbol{\xi}=(\xi_n)_{n\in \Z}$ by $\xi_n=d_xf^{n+1}v^s$ for $n \ge 0$ and $\xi_n=-d_xf^{n+1}v^u$ for $n< 0$. Then $\boldsymbol{\xi} \in Y_{f(x)}$ (this is a consequence of the fact that the sequence $\boldsymbol{\xi}$ constructed above belongs to~$Y_x$). Finally, it is easy to check that $R_{f(x)}\boldsymbol{\xi}=\boldsymbol{\mu}$. This implies that
\[
\xi_0=d_x fv^s\in E^s(f(x))\quad\text{and}\quad -d_xf\xi_{-1}=d_x fv^u\in E^u(f(x))
\]
is the hyperbolic splitting of $d_x fv$ and so the decomposition is $df$-invariant.

We have
\[
\lVert v^s\rVert_x^{\epsilon}=\lVert \xi_0 \rVert_x^{\epsilon} \le \frac{1}{\alpha_B} \lVert \boldsymbol{\xi}\rVert \le \frac{D}{\alpha_B} \lVert \boldsymbol{\mu}\rVert= D\lVert v\rVert_x^{\epsilon}
\]
and thus,
\[
\lVert v^u\rVert_x^{\epsilon}=\lVert v-v^s\rVert_x^{\epsilon} \le \lVert v\rVert_x^{\epsilon} +\lVert v^s\rVert_x^{\epsilon} \le (1+D)\lVert v\rVert_x^{\epsilon}.
\]
By~\eqref{G1}, we obtain
\[
\lVert v^s\rVert_x \le \frac{1}{K(x)}\lVert v\rVert_x \quad \text{and} \quad \lVert v^u\rVert_x \le \frac{1}{K(x)}\lVert v\rVert_x,
\]
where $K(x)=1/((2+2D)G(x))$. It follows readily from~\eqref{G2} that the functions $C$ and $K$ satisfy~\eqref{7} with $\epsilon$ replaced by $2\epsilon$. This shows that the set $\Lambda$ is nonuniformly hyperbolic.
\end{proof}

\bibliographystyle{amsplain}

\begin{thebibliography}{11}
\bibitem{BDV}
L. Barreira, D. Dragi\v cevi\' c and C. Valls, \emph{Exponential dichotomies with respect to a sequence of norms and admissibility}, Int. J. Math. \textbf{25} (2014), 1450024, 20 pp.

\bibitem{BP2}
L. Barreira and Ya. Pesin, \emph{Nonuniform Hyperbolicity}, Encyclopedia of Mathematics and Its Application 115, Cambridge University Press, 2007.

\bibitem{CLb}
C. Chicone and Yu.~Latushkin, \emph{Evolution Semigroups in
Dynamical Systems and Differential Equations}, Mathematical Surveys
and Monographs 70, Amer. Math. Soc., 1999.

\bibitem{DK}
Ju. Dalec$'$ki\u{\i} and M. Kre{\u\i}n, \emph{Stability of Solutions
of Differential Equations in Banach Space}, Translations of
Mathematical Monographs 43, Amer. Math. Soc., 1974.

\bibitem{DS}
D. Dragi\v{c}evi\'{c} and S. Slijep\v{c}evi\'{c}, \emph{Characterization of hyperbolicity and generalized shadowing lemma},
Dyn. Syst. \textbf{26} (2011), 483--502.

\bibitem{H}
J. Hale, \emph{Asymptotic Behavior of Dissipative Systems},
Mathematical Surveys and Monographs 25, Amer. Math. Soc., 1988.

\bibitem{He}
D. Henry, \emph{Geometric Theory of Semilinear Parabolic Equations},
Lect. Notes in Math. 840, Springer, 1981.

\bibitem{HH}
N. Huy, \emph{Exponential dichotomy of evolution equations and admissibility of function spaces on a half-line},
J. Funct. Anal. \textbf{235} (2006), 330--354.

\bibitem{LV}
B. Levitan and V. Zhikov, \emph{Almost Periodic Functions and Differential Equations}, Cambridge University Press, 1982.

\bibitem{MS}
J. Massera and J. Sch\"affer, \emph{Linear Differential Equations
and Function Spaces}, Pure and Applied Mathematics 21, Academic
Press, 1966.

\bibitem{M} J. Mather, \emph{Characterization of Anosov
diffeomorphisms}, Indag. Math. \textbf{30} (1968), 479--483.

\bibitem{SY}
G. Sell and Y. You, \emph{Dynamics of Evolutionary Equations},
Applied Mathematical Sciences 143, Springer, 2002.

\end{thebibliography}

\end{document}